% !TEX encoding = UTF-8 Unicode

\documentclass[a4paper,11pt,reqno,noindent]{amsart}
\usepackage[centertags]{amsmath} 
\usepackage{amsfonts,amssymb,amsthm}
\usepackage{mathtools}
\usepackage[utf8]{inputenc}
\usepackage[foot]{amsaddr}

\usepackage{color}
\definecolor{darkred}{rgb}{0.8,0,0}
\definecolor{darkblue}{rgb}{0,0,0.55}
\definecolor{darkgreen}{rgb}{0,0.39,0}
% Hyperlinks
\usepackage[colorlinks=true]{hyperref} % Remove boxes around links
\hypersetup{linkcolor=darkblue,citecolor=darkgreen}

\usepackage{graphicx}
\usepackage{dsfont}
\usepackage[english]{babel} \usepackage{newlfont} \usepackage{color}
\usepackage[body={15cm,21.5cm},centering]{geometry} \usepackage{fancyhdr}
\pagestyle{fancy} \usepackage{esint}

\fancyhead[RO,LE,LO,RE]{}
\fancyhead[CO]{\scriptsize\rightmark} \fancyhead[CE]{\scriptsize\leftmark}
 \setlength{\headheight}{12pt}

\setlength{\parindent}{0pt} \setlength{\headsep}{25pt}
\usepackage[active]{srcltx}
\usepackage{mathrsfs}

\newtheorem{theorem}{Theorem} \newtheorem{lemma}{Lemma}
\newtheorem{proposition}{Proposition} 
 
\newtheorem{definition}{Definition}

\theoremstyle{definition} 
 
\newtheorem{remark}{Remark}

 \newcommand{\supp}{\operatorname{supp}}

\newcommand{\e}{\varepsilon}

\newcommand\ecke{\mathop{\hbox{\vrule height 7pt width .3pt depth 0pt \vrule
height .3pt width 5pt depth 0pt}}\nolimits}

\newcommand{\R}{\mathbb{R}} 
\newcommand{\N}{\mathbb{N}} 
 
\renewcommand{\d}{\mathrm{d}} \renewcommand{\L}{\mathbb{L}}

 \renewcommand{\H}{\mathcal{H}}
\newcommand{\Ha}{\mathcal{H}}
 
\renewcommand{\L}{{\mathcal L}} \newcommand\wto{\rightharpoonup}

\newcommand\wsto{\stackrel{*}{\rightharpoonup}}

\newcommand{\calL}{\mathcal{L}} 

\newcommand{\calE}{\mathcal{E}}
\newcommand{\calP}{\mathcal{P}}
\newcommand{\calR}{\mathcal{R}}
\newcommand{\calD}{\mathcal{D}} \newcommand{\calH}{\mathcal{H}}

 % for \ud x= dx
\newcommand{\dx}{\,\mathrm{d}x}
\newcommand{\dy}{\,\mathrm{d}y}

\newcommand{\W}{\mathcal{W}}

\newcommand{\eps}{\varepsilon}

%Lebesgue measure

\newcommand{\curr}[1]{[\![#1]\!]}
\newcommand{\ovar}[1]{\underline{v}(#1)}
\newcommand{\angles}[2][]{#1\langle#2#1\rangle}
\newcommand{\mass}[2][]{\mathsf{M}_{#1}(#2)}
\newcommand{\sfI}{\mathsf{\mathbf{I}}}
\newcommand{\sfR}{\mathsf{\mathbf{R}}}
\newcommand{\sfV}{\mathsf{V}}
\newcommand{\sfIV}{\mathsf{IV}}
\newcommand{\sfRV}{\mathsf{RV}}

\newcommand{\weakto}{\rightharpoonup}
\newcommand{\weakstarto}{\stackrel{\ast}{\rightharpoonup}}
\newcommand{\sphere}{\mathbb{S}}

\newcommand{\dwp}{\vartheta}
\synctex=1
\sloppy
%%%%%%%%%%%%%%%%%%%%%%%%%%%%%%%%%%%%%%%%%%%%%%%%%%%%%%%%%%%%%%%%%%%%%%%%%%%%%%%%%%%%%%%%%%%
\begin{document}

\title{Phase separation on varying surfaces and convergence of diffuse interface approximations}
\author{Heiner Olbermann}
\address[Heiner Olbermann]{UCLouvain, Belgium}
\email{heiner.olbermann@uclouvain.be}
\author{Matthias Röger}\address[Matthias Röger]{Department of Mathematics,
Technische Universit\"at Dortmund}
\email{matthias.roeger@tu-dortmund.de}
\maketitle

\date{\today} 

\maketitle
% \tableofcontents
\begin{abstract}
In this paper we study phase separations on generalized hypersurfaces in Euclidian space.
We consider a diffuse surface area (line tension) energy of Modica--Mortola type and prove a compactness and lower bound estimate in the sharp interface limit.
We use the concept of generalized $BV$ functions over currents as introduced by Anzellotti et.~al. [Annali di Matematica Pura ed Applicata, 170, 1996] to give a suitable formulation in the limit and achieve the necessary compactness property.
We also consider an application to phase separated biomembranes where a Willmore energy for the membranes is combined with a generalized line tension energy.
For a diffuse description of such energies we give a lower bound estimate in the sharp interface limit.
\\[2ex]\noindent%
{\bf AMS Classification.} 
49Q20, %  Variational problems in a geometric measure-theoretic setting
49J45, % Methods involving semicontinuity and convergence; relaxation
92C10 % Biomechanics
\\[2ex]\noindent%
{\bf Keywords. }
Sharp interface limit, phase separation on generalized surfaces, multiphase membranes.
\end{abstract}

\section{Introduction}
Phase separation processes are ubiquitous in many applications from material sciences, physics and biology.
The mathematical analysis is in many cases well understood and has seen many contributions from different communities.

As two fundamental classes of descriptions we can distinguish \emph{sharp} and \emph{diffuse interface models}.
In the sharp interface approach each material point belongs to exactly one of different phases or to lower dimensional interfaces between the distinct phases.
Mathematically this can be described by a family of phase indicator functions (characteristic functions of subsets).

In the simplest case of an open domain $\Omega\subset\R^n$ and two phases it is sufficient to consider one phase indicator function $u:\Omega\to\{0,1\}$.
The relevant energy is in many cases proportional to the surface area of the phase interface,
which can be formulated as a perimeter functional for the phase indicator function,
\begin{equation*}
  \calP(u) = 
  \begin{cases}
    \int_\Omega |\nabla u|\quad&\text{if $u$ is of bounded variation,}\\
    +\infty &\text{ else.}
  \end{cases}
  \label{1eq:Per}
\end{equation*}

In the diffuse interface (or phase field) approach one allows for mixtures between phases and considers concentration fields of the different phases, which are typically taken as smooth functions on the considered domain.

A typical example of a diffuse interface energy is the Modica--Mortola or Van der Waals--Cahn-Hilliard energy, given by
\begin{equation}
  \calP_\eps(u) := \int_{\Omega} \Big(\eps |\nabla u|^2 +
  \frac{1}{\eps}W(u)\Big)\,d\L^n\,,
  \label{1eq:CH}
\end{equation}
where $W$ is a suitable nonnegative double-well potential with $\{W=0\}=\{0,1\}$ and $u$ is a smooth function on $\Omega\subseteq\R^n$.
To achieve low energy values, the function $u$ has to be close to the wells of the potential except for thin transition layers with thickness of order $\eps$.

One key question is  whether a given diffuse interface model reduces to a sharp interface model in the \emph{sharp interface limit} $\eps\to 0$.
In the case of the energy \eqref{1eq:CH} this has been made rigorous in the celebrated result by Modica and Mortola \cite{MoMo77,Modi87}:
the functionals $\calP_\eps$ converge in the sense of $\Gamma$-convergence to the perimeter functional,
\begin{equation*}
  \calP_\eps \to 2k \calP,\quad k = \int_{0}^1 \sqrt{W}.
  \label{eq:c0}
\end{equation*}

\medskip

In the present contribution we consider phase separation processes on generalized hypersurfaces, given by varifolds and currents.
We are interested in applications, where both the location of the generalized hypersurfaces and the separation into phases represent degrees of freedom.
An instance of this situation arises in the modeling of biomembranes, where the shape and internal organization of lipids and other components can adapt to the environment, see for example \cite{DJBS21}. The dynamic structural changes play a crucial role in numerous functions. Of course, dependence of some  energy functional on the location of the membrane and its separation into phases at the same time could occur in  a variety of other applications from different fields. 

\medskip

Variational models for multiphase membranes therefore depend on both the shape of the membrane and the internal composition.
Typical ingredients are a bending energy with phase-dependent parameters and a line tension energy between phases.
In the simplest situation of two phases, extensions of the classical one-phase shape energies of Canham and Helfrich \cite{Canh70,Helf73} have been proposed in the form of a Jülicher-Lipowsky energy \cite{JuLi96}
\begin{equation}
  \mathcal{E}(S_1,S_2) = \sum_{j=1,2}\int_{S_j} \Big(k_1^j (H-H_0^{j})^2+ k_2^j K\Big)\,d\Ha^{2} + \sigma\int_{\Gamma} 1\,d\Ha^1.
  \label{eq:E-twophase}
\end{equation}
Here the biomembrane is represented by a closed surface $S\subset\R^3$ that is decomposed into a disjoint union of open subsets $S_1,S_2$ of $S$ representing the phases, and their common boundary $\Gamma$.
The first integral represents a bending energy that involves  in general phase-dependent bending constants $k_1^j,k_2^j$ and the spontaneous curvature $H_0^j$, $j=1,2$.
The second integral in \eqref{eq:E-twophase} describes a phase separation (line tension) energy.
A simple prototype of the bending contribution is the Willmore energy, that is obtained in the case $H_0^j=0=k_2^j$, $j=1,2$.

Corresponding diffuse interface descriptions consider a smooth field $u_\eps$ that describes the phase decomposition  on the hypersurface $S$, and replace the line tension energy $\int_{\Gamma} 1\,d\Ha^1$ by a Modica--Mortola type approximation, which leads to energies of the form
\begin{equation*}
  \calE(S_\eps,u_\eps) = \int_{S_\eps} \Big(k_1(u_\eps) (H-H_0(u_\eps))^2+ k_2(u_\eps) K\Big)\,d\Ha^2 + \sigma \int_{S_\eps} \Big(\eps |\nabla u_\eps|^2 +
  \frac{1}{\eps}W(u_\eps)\Big)\,d\Ha^2.
\end{equation*}
In a regular setting, $S_\eps$ would be assumed to be a hypersurface of class $C^2$ and $u_\eps$ to be a smooth function on $S_\eps$, with $\nabla u_\eps$ denoting the tangential gradient on $S_\eps$.

A rigorous mathematical understanding of such energies is rather challenging and very little seems to be known in a general situation.
Reductions to rotational symmetry have been studied in \cite{ChMV13,Helm13,Helm15}.
The only variational analysis in the general case seems to be the recent work of Brazda et.~al.~\cite{BrLS20}.

One of the challenges is that bounds on curvature energies alone do not induce good compactness properties in classes of smooth surfaces.
Therefore it is necessary to consider generalized concepts of surfaces $S_\eps$, such as (oriented) integer rectifiable varifolds with a weak second fundamental form as introduced by Hutchinson \cite{hutchinson1986second}.
Such concepts have been used rather successfully to study Willmore or Canham--Helfrich type functionals.

In the case of phase separated membranes an additional challenge is to describe the decomposition into distinct phases and phase interfaces. 
In \cite{BrLS20} phases are characterized by oriented curvature varifolds with boundary, and suitable conditions are posed that guarantee an appropriate global structure.
Below we will present approaches that in contrast describe phases by smooth phase fields or generalized indicator functions.
This may offer a more concise description and embeds diffuse and sharp interface formulations in a common framework. 

\medskip

Another challenge is that the question of suitable concepts of Sobolev and $BV$ spaces on varifolds or currents is non-trivial. 
In the case of Sobolev spaces on varifolds, it seems that it is not possible to guarantee all the `good' properties that are present in the case of these spaces over open domains in $\R^n$, see the discussion in \cite{menne2016sobolev}.
Best suited for our purposes are $BV$ functions on currents as introduced by Anzellotti, Delladio and Scianna \cite{anzellotti1996bv} (see also \cite{Ossa97}) that are defined in terms of a generalized graph of a function over a current, see below for a precise definition.
The usefulness of this definition has been demonstrated in  \cite{anzellotti1996bv} by  providing  compactness and closure properties of the space of generalized $BV$ functions, as well as a coarea formula. 
The caveat of the compactness and closure statements is that they require additional (and rather restrictive) assumptions on the convergences of the underlying currents.
These are in particular expressed by a suitable strict convergence property of currents, similar to the strict convergence of $BV$ functions.
Our definition of Sobolev spaces on currents is characterized as an appropriate closure of smooth test functions in the ambient space and is justified using appropriate partial integration formulas on currents provided in \cite{anzellotti1996bv}.

\medskip

For  alternative formulations of Sobolev and  $BV$ functions with respect to measures see \cite{bouchitte2001convergence,BeBF99}.

\medskip

The main goal of the present paper is the derivation of rigorous sharp interface limits for diffuse phase separation energies of Modica--Mortola type for phase fields on varying generalized surfaces.
We consider a generalization of the energy \eqref{1eq:CH} for pairs of  currents and phase fields.
The currents are assumed to represent boundaries of finite perimeter sets in $\R^n$, and the phase fields are taken from the space of generalized $H^{1,p}$-functions with respect to the current.
Using such generalizations leads to a suitable formulation of a Modica--Mortola energy
in the form
\begin{equation}
  I_\eps(u_\eps,S_\eps)=\int_{\R^n} \big(\e|\nabla_{\mu_\e} u_\e|^2+\e^{-1}W(u_\e)\big) \d\mu_\eps\,,
  \label{1eq:MM-sharp}
\end{equation}
where $S_\eps$ is a the current representing the boundary of a set of finite perimeter, with associated  generalized surface area measure $\mu_\eps$, see Definition \ref{3def:MM} for a precise definition.

The limit sharp interface energy is a function of pairs consisting of a current and a phase indicator function.
We again consider currents $S$ that arise from integration over the boundary of finite perimeter sets.
The phase indicator functions $u$ belong to the space $BV(S)$ in the sense of \cite{anzellotti1996bv}.
We show that we can associate a generalized jump set $J_u$ to $u$ that consists of a $\Ha^{n-2}$-rectifiable set.
This in particular allows to assign a generalized phase interface area to this set, of the form
\begin{equation}
  I(u,S)=\Ha^{n-2}(J_u)\,.
  \label{1eq:MM}
\end{equation}

Our first main result  is a compactness and lower bound estimate of the functionals $I_\eps$ that correspond to a compactness and lower bound statement in the spirit of  Gamma-convergence, see Theorem \ref{thm:momolb} below.
As we rely on a compactness result from \cite{anzellotti1996bv} we in particular need to impose a crucial strict convergence assumption for the approximating currents.

As a second main contribution we present an application of this result to a Jülicher--Lipowsky type two-phase biomembrane energy of the form \eqref{eq:E-twophase}, where  for simplicity we restrict ourselves to a bending energy of Willmore type.
Here the strict convergence property needed for the application of  
Theorem \ref{thm:momolb} is enforced by the assumption that the Willmore energy of the approximating varifolds is small, allowing to deduce a unit density property by the Li--Yau inequality \cite{li1982new}.

Under these  assumptions we are able to prove a compactness and lower bound result, see Theorem \ref{4thm:main} below.

\medskip
The paper is organized as follows. 
In the next section we present some notations and recall some relevant concepts of generalized surfaces and generalized  $BV$ and Sobolev functions.
The main result on a sharp interface limit of our generalized Modica--Mortola energy for varying surfaces is contained in Section \ref{sec:3}, the application to two-phase biomembrane energies is the content of the final Section \ref{sec:4}.

\section{Notation and auxiliary results}

For a Radon measure $\mu$ in $\R^n$ and $m\leq n$ we denote by $\theta_m(\cdot,\mu)$ the $m$-dimensional density of $\mu$, 
\[
\theta_m(x,\mu)=\lim_{r\to 0}\frac{\mu(B(x,r))}{\alpha(m) r^m}\,,
\]
(if the latter exists), with $\alpha(m)$ being the volume of the unit ball in $\R^m$, $\alpha(m)=\calL^m(B(0,1))$.

Convergence of Radon measures in $\R^n$ means weak* convergence in $C^0_c(\R^n)$ and is denoted by `$\wsto$'.
Analogously, convergence as currents and as (oriented) varifolds means weak* convergence in the respective dual spaces (see below) and is denoted by `$\wsto$'.

\subsection{Currents and $BV$ functions on currents}
We briefly recall the definition of (rectifiable) currents and in particular the notion of $BV$ functions on currents from \cite{anzellotti1996bv}.

We denote by $\Lambda_k(\R^N)$, $0\leq k\leq N$ and by $\Lambda^k(\R^N)$ the spaces of all $k$-vectors and $k$-covectors, respectively, in $\R^N$.
We call $v$ a \emph{simple $k$-vector} if $v$ can be written as $v=v_1\wedge \ldots\wedge v_k$. 
With $\Lambda(N,k)=\{\alpha=(\alpha_1,\dots,\alpha_k)\,:\, 1\leq \alpha_1<\dots<\alpha_k\leq N\}$, $(e_1,\dots,e_N)$ the standard orthonormal basis of $\R^N$ and $(\dx^1,\dots,\dx^N)$ the corresponding dual basis of $\Lambda^1(\R^N)$ we can represent any $v\in \Lambda_k(\R^N)$ and any $\omega\in \Lambda^k(\R^N)$ uniquely as
\begin{equation*}
  v=\sum_{\alpha\in \Lambda(N,k)} a^\alpha e_\alpha,\qquad
  \omega=\sum_{\alpha\in \Lambda(N,k)} a_\alpha  \dx^\alpha,
\end{equation*}
with $a^\alpha,a_\alpha\in\R$, $e_\alpha=e_{\alpha_1}\wedge\ldots\wedge e_{\alpha_k}$, $\dx^\alpha=\dx^{\alpha_1}\wedge\ldots \dx^{\alpha_k}$ for all $\alpha\in \Lambda(N,k)$.
This representation induces a scalar product and an induced norm $|\cdot|$ on $\Lambda_k(\R^N)$ and $\Lambda^k(\R^N)$. The canonical Hodge star isomorphism is denoted by $*:\Lambda_k(\R^N)\to\Lambda_{N-k}(\R^N)$.

\smallskip
For $U\subset \R^N$ open and $k \in \{0,\dots,N\}$ we denote by $\calD^k(U)$ the space of all infinitely differentiable $k$-differential forms $U\to\Lambda^k(\R^N)$ with compact support in $U$, equipped with usual topology of distributions.

The space $\calD_k(U)$ of \emph{$k$-currents on $U$} is the dual of $\calD^k(U)$. 
We denote by $\partial T\in \calD_{k-1}(U)$ the \emph{boundary} of $T \in \calD_k(U)$, defined by
\begin{equation*}
  \angles{\partial T,\omega} = \angles{T,d\omega}\quad\text{ for all }\omega\in \calD^{k-1}(U).
\end{equation*}

We say a $k$-current $T$ on $U\subset \R^N$ is representable by integration if 
\[
  \mass[U]{T}:=\sup\{\angles{T,\varphi} \,:\,\varphi\in \calD^k(U),\|\varphi\|_{L^\infty}\leq 1\}<\infty\,.
\]
In that case by the Riesz representation theorem there exists a Radon measure $\|T\|$ on $U$ and a $\|T\|$-measurable function $\vec T:U\to \Lambda_k(\R^N)$ satisfying $|\vec T|=1$ a.e.~on $U$ such that
\begin{equation}\label{finmass-curr}
  \angles{T,\omega} :=\int_U \angles{\omega,\vec T}\,d\|T\|
  \quad\text{ for all }\omega\in\calD^k(U).
\end{equation}
We call $\|T\|$ the \emph{mass measure} of $T$ and $\mass{T}=\mass[U]{T}$ the total mass (in $U$). We write $\supp T\equiv\supp \|T\|$, where the latter is the largest (closed) subset of $U$ for which every open neighborhood of every point in the set has positive $\|T\|$-measure.

\smallskip
Given a $k$-rectifiable set $M\subset\R^N$ for $\calH^k$-almost any $p\in M$ there is a well-defined measure-theoretic tangent space $T_p M$. 
We say that a map $\tau: M\to \Lambda_k(\R^N)$ is an \emph{orientation} on $M$ if such a map is $\calH^k$-measurable and $\tau(p)$ is a unit simple $k$-vector on $\R^N$ that spans $T_pM$ for $\calH^k$-almost any point $p\in M$.  

Let $U\subset \R^N$ be open, $M\subset U$ be $k$-rectifiable, $\tau$ be an orientation on $M$, and $\rho \colon M \to \R^+$ be locally $\calH^k$-summable function. 
Then, we define a current $T=\curr{M,\rho,\tau} \in \calD_k(U)$ by
\begin{equation}\label{rect-curr}
  \langle T,\omega\rangle:=\int_M \langle \omega,\tau\rangle\,\rho\,d\calH^k
  \quad\text{ for all }\omega\in\calD^k(U).
\end{equation}
The set $\sfR_k(U)$ of currents $T\in \calD_k(U)$ which can be written in the form $T=\curr{M,\tau,\rho}$ as above are called \emph{rectifiable currents}, the function $\rho$ is then called the \emph{multiplicity} of $T$. 
 
If in addition $\rho$ is integer-valued we call $T$ \emph{integer-rectifiable} and write $T\in \sfI_k(U)$.
A current $T\in \sfI_k(U)$ with $\partial T\in \sfI_{k-1}(U)$ is called \emph{integral}.

In the context of graphs over sets in $\R^n$ it is often useful to write $\R^{n+1}=\R^n_x\times\R_y$, to denote the standard basis in $\R^{n+1}$ by $(e_1,\dots,e_n,e_y)$, and to use coordinates $(x,y)=(x_1,\dots,x_n,y)$ relative to this basis.
The associated covector basis in $\Lambda^1(\R^{n+1})$ is written as $dx^1,\dots,dx^n,dy$.

The \emph{stratification} of a $k$-vector $\xi\in \Lambda_k(\R^n_x\times \R_y)$ is given by the unique decomposition
\begin{align}
	\xi=\xi_0+\xi_1,\quad \xi_0\,\in\, \Lambda_k(\R^n_x),\quad \xi_1\in \Lambda_{k-1}(\R^n_x)\wedge \Lambda_1(\R_y),
  \label{eq:def-strati}
\end{align}
that is
\begin{align*}
	\xi_0 &= \sum_{\alpha\in \Lambda(n,k)} 
	\angles{\dx^\alpha,\xi} e_\alpha,\qquad
	\xi_1 = \sum_{\beta\in \Lambda(n,k-1)} 
	\angles{\dx^\beta\wedge \dy,\xi} e_\beta\wedge e_y.
\end{align*}
The corresponding stratification of a current $T\in \Lambda_k(\R^n_x\times \R_y)$ is given by 
\begin{align*}
	\angles[\Big]{T_0,\sum_{\alpha\in \Lambda(n,k)} a_\alpha \dx^\alpha+ \sum_{\beta\in \Lambda(n,k-1)}a_\beta \dx^\beta\wedge \dy} &= \angles[\Big]{T,\sum_{\alpha\in \Lambda(n,k)} a_\alpha \dx^\alpha},\\
	\angles[\Big]{T_1,\sum_{\alpha\in \Lambda(n,k)} a_\alpha \dx^\alpha+ \sum_{\beta\in \Lambda(n,k-1)}a_\beta \dx^\beta\wedge \dy} &= \angles[\Big]{T,\sum_{\beta\in \Lambda(n,k-1)}a_\beta \dx^\beta\wedge \dy},
\end{align*}
for $a_\alpha,a_\beta\in C^\infty_c(\R^n_x\times \R_y)$.
In the case that $T=\curr{M,\xi,\rho}$ is rectifiable we obtain for $j=0,1$
\begin{equation*}
  \angles{T_j,\omega} = \int_M \angles{\omega,\xi_j}\rho \,d\calH^k 
  \quad\text{ for all }\omega\in\calD^k(\R^n_x\times \R_y).
\end{equation*}

\smallskip
Let $M\subset \R^n_x\subset \R^{n+1}$ be a $k$-rectifiable set, and let $p:\R^{n+1}\to\R^n_x$ denote the projection on $\R^n_x$. 

For the rectifiable $k$-current $S=\curr{M,\tau,\rho}$ and a function $u:M\to \R$ we consider the set between the graph of $u$ and $\{y=0\}$ in $\R^{n+1}$,
\begin{equation*}
  E_{u,S} = \{(x,y)\in M\times\R_y\,:\, 0<y<u(x)\text{ if }u(x)>0,\,u(x)<y<0\text{ if }u(x)<0\}
\end{equation*}
and define for $(x,y)\in E_{u,S}$ an induced orientation and induced multiplicity by
\begin{align*}
  \alpha(x,y)&=
  \begin{cases}
    e_y\wedge\tau(x) & \text{ if } y>0\\
    -e_y\wedge\tau(x) & \text{ if } y<0
  \end{cases}
  \\
  \theta(x,y)&=\rho(x).
\end{align*}
We set 
\[
\Sigma_{u,S}:=[\![E_{u,S},\alpha,\theta]\!]
\]
and obtain the \emph{generalized graph of $u$ over $S$} as
\begin{equation}\label{eq:1}
  T_{u,S} = -\partial \Sigma_{u,S} +S\otimes \delta_0\,,
\end{equation}
where $S\otimes \delta_0$ is defined by $\langle S\otimes\delta_0,\varphi\rangle=\langle S,\varphi(\cdot,0)\rangle$.

\begin{definition}
  Let a rectifiable $k$-current $S=\curr{M,\tau,\rho}$, $M\subset\R^n_x$ and a function $u:M\to\R$ be given.
  Then we say that $u$ is a function of bounded variation over $S$ and write $u\in BV(S)$ if the total mass of $T_{u,S}$ is bounded, $\mass{T_{u,S}}<\infty$.
\end{definition}
\begin{remark}
From the definition, we straightforwardly get 
\begin{equation}\label{eq:7}
  \begin{split}
    T_{u,S}\left(\varphi_\alpha\dx^\alpha\right)
    &=(T_{u,S})_0\left(\varphi_\alpha\dx^\alpha\right)\\
    &= -\int_M\langle \dx^\alpha,\tau\rangle \varphi_\alpha(x,u(x))\rho(x)\d \H^k(x)\\
    T_{u,S}\left(\varphi_\beta\dx^\beta\wedge \d y\right)&=(T_{u,S})_1\left(\varphi_\beta\dx^\beta\wedge \d y\right)\\
&=(-\partial\Sigma_{u,S}\ecke\d y)(\varphi_\beta\d x^\beta)\\
    &=-\int_M\int_{0}^{u(x)}\sum_{i=1}^n \angles{\dx_i\wedge \dx^\beta,\tau} \frac{\partial \varphi_\beta(x,y)}{\partial x_i}\d y \rho(x)\d\H^k(x)
  \end{split}
\end{equation}
 for $u\in BV(S)$, $\alpha \in \Lambda(n,k)$ and
    $\beta\in\Lambda(n,k-1)$.
\end{remark}

\begin{remark}
\label{rem:TuSrect}
If $S=\curr{M,\tau,\rho}$ is in $\sfI_k(\R_x^n)$ and $u\in BV(S)$ then the boundary rectifiability theorem \cite[Theorem 30.3]{Simo83} implies that $\partial\Sigma_{u,S}$ is integer rectifiable and hence $T_{u,S}\in \sfI_k(\R_x^n\times\R_y)$.
Therefore there exists a $k$-rectifiable set $\calR\subset \R^{n+1}$, a multiplicity function $\theta:\calR\to\N$ and an orientation $\xi:\calR\to \Lambda_k(\R_x^n\times\R_y)$ such that 
\begin{equation*}
  T_{u,S} = \curr{\calR,\xi,\theta}.
\end{equation*}
\end{remark}

To induce good closure and compactness properties of the generalized graphs in \cite{anzellotti1996bv} the following \emph{strict convergence} property for currents was was introduced.
We denote by $q:\R^{n+1}\to\R$ the projection on the vertical component, $q(x,y)=y$.
\begin{definition}[Strict convergence]
  Let $T_j$ be a sequence of $n$-dimensional integer multiplicity rectifiable currents in $\R^{n+1}$ such that
  \begin{itemize}
  \item[(i)]$T_j\wsto T$\,,
  \item[(ii)] $\sup_j(\mass{T_{j,0}\ecke q}+\mass{T_j})<\infty$\,,
  \item[(iii)]  $\lim_{j\to\infty} \mass{p_\# T_j}=\mass{p_\#T}$\,.
  \end{itemize}
Then we say that $T_j$ converges strictly to $T$, $T_j\stackrel{c^*}{\wto}T$.
\end{definition}

\begin{remark}
  In the case that $T_j=T_{u_j,S_j}$, $T=T_{u,S}$ are generalized graphs with $S_j=\curr{M_j,\tau_j,\rho_j}$ and $S=\curr{M,\tau,\rho}$ being $k$-rectifiable currents we obtain
  \begin{equation*}
    \mass{T_{j,0}\ecke q} = \int_{M_j} |u_j|\rho_j\,d\Ha^k
    =\|u_j\|_{L^1(\|S_j\|)}\,.
  \end{equation*}
  
  Moreover, in this case the convergence $T_j\wsto T$ implies
  \begin{equation*}
    S = p_{\#}{T_{u,S}} = \lim_{j\to\infty} p_{\#}{T_{u_j,S_j}} = \lim_{j\to\infty}S_j,
  \end{equation*}
  and the convergence in the third item is equivalent to $\mass{S_j}\to\mass{S}$, i.e.
  \begin{equation*}
    \int_{M_j}\rho_j\,d\Ha^k \,\to\, \int_M \rho\,d\Ha^k.
  \end{equation*}
\end{remark}

\subsection{Sobolev functions with respect to currents}
We now define Sobolev functions with respect to currents, following \cite{bouchitte2001convergence}.

Let $S=\curr{ M,\tau,\rho}$ be as above, and $\mu=\|S\|$. 
The projection onto the tangent space of $M$ at $x$ is defined for $\H^k\ecke M$ almost every $x\in M$; we will denote it by $P(x)$. 
For $u\in C^\infty_c(\R^n)$ we set 
\[
\nabla_{\mu} u(x):=P(x)\nabla u(x)\,.
\]
For any $p\in [1,\infty)$ this defines an operator $\nabla_{\mu}:C^\infty_c(\R^n)\subset L^p_\mu(\R^n)\to L^p_\mu(\R^n;\R^n)$.
By Lemma \ref{lem:gradient_uniqueness_current} below this operator is closable and we define $H^{1,p}(S)$ as the domain of the unique closed extension of $\nabla_{\mu}$.
This domain coincides with the closure of $C^\infty_c(\R^n)$ with respect to the norm 
\[
\|u\|_{H^{1,p}(S)}=\|u\|_{L^p_\mu(\R^n)}+\|\nabla_\mu u\|_{L^p_\mu(\R^n;\R^n)}\,.
\]
We use the same notation as in the smooth case also for the extension $\nabla_{\mu}:H^{1,p}(S)\to L^p_\mu(\R^n;\R^n)$.

\medskip

% This definition yields a unique tangential gradient  $\nabla_\mu u\in L^p_\mu(\R^n;\R^n)$ for every $u\in H^{1,p}(S)$, as is  shown in the upcoming  lemma. In the 
The closability of $\nabla_{\mu}:C^\infty_c(\R^n)\subset L^p_\mu(\R^n)\to L^p_\mu(\R^n;\R^n)$ is guaranteed by the following lemma.
For simplicity we will assume that $\partial S=0$, since this is the relevant case in the following sections. 
Also, we will use the measure representation of $BV$ functions from \cite[Corollary 2.10(ii)]{anzellotti1996bv}, which states that whenever $S=\curr{ M,\tau,\rho}$ and $u\in BV(S)$, then there exists a $\Lambda_{n-k+1}(\R^n)$ valued measure $R(S,u)$ whose components $R^\gamma(S,u)$, $\gamma\in \Lambda(n,n-k+1)$, are given by 
\[
\langle \varphi,R^\gamma(S,u)\rangle=\int_M u (\nu\wedge\nabla \varphi )^\gamma \rho\d\H^k \quad\text{ for all } \varphi\in C^1_c(\R^n)\,.
\]

\begin{lemma}
\label{lem:gradient_uniqueness_current}
Assume that $S$ is as above with $\partial S=0$. 
Then for any $(v_j)_{j\in \N}\subset C^\infty_c(\R^n)$, $w\in L^p_{\mu}(\R^n;\R^n)$ with
\begin{equation*}
  v_j \to 0 \,\text{ in }L^p_{\mu}(\R^n)\quad\textrm{ and }\quad v_j\to w\,\text{ in }L^p_{\mu}(\R^n;\R^n)\,,
\end{equation*}
we have $w=0$.
\end{lemma}

\begin{proof}
Let $\varphi\in C_c^\infty(\R^n)$. 
This implies that $R(S,\varphi)$ is in $C^\infty_c(\R^n;\Lambda_{n-k+1}(\R^n))$, and we obtain
\[
0=\lim_{j\to\infty} \int_M R^\gamma(S,\varphi)v_j\rho\d\H^k=-\lim_{j\to\infty}\int_M R^\gamma(S,v_j)\varphi\rho\d\H^k\, 
\]
for $\gamma\in\Lambda(n,n-k+1)$.
% By \cite[Theorem 4.3]{ADS}, there exists $v\in BV(S)$ such that $T_{S,v_j}\wsto T_{S,v}$ weakly as currents in $\R^{n+1}$. 
Using the fact $R(S,v_j)=\nu\wedge\nabla_\mu v_j$ (see \cite[Proposition 1.2]{anzellotti1996bv}) 
% and  the boundedness of $(\nabla_\mu v_j)_{j\in\N}$  in $L^p_{\mu}(\R^n;\R^n)$, we obtain that there exists a subsequence (which we do not relabel) and $w\in L^p_{\mu}(\R^n;\R^n)$ such that $\nabla_\mu v_j\wto w$ in $L^p_\mu(\R^n;\R^n)$, and 
we deduce that 
\begin{equation*}
  0=\lim_{j\to\infty}\int_M R^\gamma(S,v_j)\varphi\rho\d\H^k
  =\int (\nu\wedge\nabla_\mu v_j)^\gamma\varphi\rho\d\H^k
  =\int (\nu\wedge w)^\gamma\varphi\rho\d\H^k\,.
\end{equation*}
By the fact that $\nabla_\mu v_j$ is orthogonal to $\nu$, the same holds for the vector field $w$, and hence $w=0$ follows since $\varphi$ can be chosen arbitrarily.
\end{proof}

\subsection{Oriented varifolds}
We introduce oriented varifolds as in \cite{hutchinson1986second}. 
Let $G^o(n,m)$ denote the set of $m$-dimensional oriented subspaces of $\R^n$. 
This set may be identified with the simple unit $m$-vectors in $\Lambda_m(\R^n)$. 
An oriented $m$-varifold is an element of $\mathcal M(\R^n\times G^o(n,m))$. 

If $M$ is $m$-rectifiable with orientation $\tau$, and if $\theta_\pm:M\to\R^+_0$ are locally $\Ha^m$-summable multiplicity functions with $\theta_++\theta_->0$, we write $\underline{v}(M,\tau,\theta_\pm)$ for the associated rectifiable oriented varifold
\[
\underline{v}(M,\tau,\theta_\pm)(\varphi)=\int_{M\times G^o(n,m)}\big(\theta_+(x)\varphi(x,\tau(x))+\theta_-(x)\varphi(x,-\tau(x))\big)\d \H^m(x)\,.
\]
We then can pass to a representation $\ovar{M,\tilde\tau,\tilde\theta_\pm}$ such that $\tilde\theta_+\geq\tilde\theta_-$.
In the following we always assume this additional property.

If the multiplicity functions $\theta_\pm$ are $\N_0$-valued, then we say that $\underline{v}(M,\tau,\theta_\pm)$ is an integral oriented $m$-varifold. 
The class of $m$-dimensional oriented (resp.~rectifiable oriented, resp.~integral oriented) varifolds is denoted by $\sfV^o_m(\R^n)$ (resp.~$\sfRV^o_m(\R^n)$, resp.~$\sfIV^o_m(\R^n)$). 
To $V\in \sfV^o_m(\R^n)$, we associate the $m$-dimensional current
\[
\underline{c}(V)(\varphi)=\int_{\R^n\times G^o(n,m)}\langle \varphi(x),\xi\rangle \d V(x,\xi)
\]
and observe that in the case of $V\in\sfRV^0_m(\R^n)$, $V=\underline{v}(M,\tau,\theta_\pm)$
\begin{equation*}
  \underline{c}(V)(\varphi)=\int_M\langle \tau(x),\varphi(x)\rangle (\theta_+-\theta_-)(x)\d \Ha^m(x).
\end{equation*}
We remark that convergence as oriented varifolds implies convergence of the associated currents,
\begin{equation}
  V_j \wsto V  \quad\implies\quad \underline{c}(V_j) \wsto \underline{c}(V)\,.
  \label{2eq:orvcurr}
\end{equation}

% This definition yields a unique tangential gradient  $\nabla_\mu u\in L^p_\mu(\R^n;\R^n)$ for every $u\in H^{1,p}(S)$, as we prove in Lemma \ref{lem:gradient_uniqueness} in the appendix. 

% For a Radon measure $\mu$ on an open set $\Omega\subset\R^n$, we introduce the Sobolev space $H^{1,p}_\mu(\Omega)$ as in \cite{bouchitte2001convergence}: We consider the linear operator $A:\calD(\R^n)\to L^p_\mu(\Omega;\R^n)$ defined by 
% \[
% (A\varphi)(x)=P_{\mu}(x)\nabla \varphi(x)\,,
% \]
% where $P_{\mu}(x)$ is the projection onto the tangent space of $\mu$ at $x$. We do not recall the definition of tangent spaces of measures from \cite{bouchitte2001convergence} here, but only note that in the case that $\mu=\|V\|$ is the mass measure of an integer rectifiable $k$-varifold with locally bounded first variation, then $P_{\mu}(x)$ is the projection onto the $k$-dimensional tangent space of $\supp\mu$ in $x$, which exists for $\mu$ almost every $x$. 
% With this definition the operator $A$ is closable in the norm given by $\|\varphi\|_{L^p_{\mu}(\Omega)}+\|A\varphi\|_{L^p_\mu(\Omega;\R^n)}$, and $H^{1,p}_\mu(\Omega)$ is defined as the domain of the unique closed extension $\bar A$, which is denoted by $\nabla_\mu$. 

% \medskip

% For the case that $\mu=\|V\|$ is the mass measure of an integer rectifiable $k$-varifold with locally bounded first variation an equivalent definition for Sobolev spaces has been given in \cite{menne2016sobolev}, based on the previous work \cite{menne2016weakly}.  

\subsection{Measure-function pairs}
We use the definitions of measure-function pairs from \cite{moser2001generalization}, which in turn are based on \cite{hutchinson1986second}. Let $\Omega\subset \R^n$.
If $\mu\in \mathcal{M}(\R^n)$ and $f\in L^1_{\mathrm{loc},\mu}(\Omega;\R^m)$, then we say that $(\mu,f)$ is a measure-function pair over $\Omega$ with values in $\R^m$. 
\begin{definition}
Let $\{(\mu_k,f_k):k\in\N\}$ and $(\mu,f)$ be measure-function pairs over $\Omega$ with values in $\R^m$, and $1\leq p<\infty$. 
\begin{itemize}
 \item[(i)] We say that $(\mu_k,f_k)$ converges weakly in $L^p$ to $(\mu,f)$ and write 
\[
(\mu_k,f_k)\wto (\mu,f) \quad\text{ in } L^p
\]
if $\mu_k\wsto \mu$ in $\mathcal M(\Omega)$, $\mu_k\ecke f_k\wsto \mu\ecke f$ in $\mathcal M(\Omega;\R^m)$, and $\|f_k\|_{L^p_{\mu_k}(\Omega;\R^m)}$ is uniformly bounded.
\item[(ii)] We say that $(\mu_k,f_k)$ converges strongly  in $L^p$ to $(\mu,f)$ and write 
\[
(\mu_k,f_k)\to (\mu,f) \quad\text{ in } L^p
\]  
if for all $\varphi\in C_c^0(\Omega\times\R^m)$, 
\[
\lim_{k\to\infty}\int_\Omega \varphi(x, f_k(x))\d\mu_k(x)=\int_\Omega \varphi(x, f(x))\d\mu(x)\,,
\]
and
\[
\lim_{j\to \infty}\int_{S_{kj}}|f_k|^p\d\mu_k=0 \quad\text{ uniformly in } k\,,
\]
where $S_{kj}=\{x\in\Omega:|x|\geq j \text{ or } |f_k(x)|\geq j\}$.
 \end{itemize}
\end{definition}

\subsection{Convergence of BV functions over varying currents}
\label{sec:conv-bv-funct}
Under suitable assumptions the strict convergence of currents associated to generalized graphs induces a measure-function pair convergence that will be very useful in the following.
\begin{proposition}\label{2pro:mfp-convergence}
Consider a sequence $(S_j)_j$ of $(n-1)$-rectifiable currents in $\R^n$.
Moreover, let a sequence $(u_j)_j$ be given with $u_j\in BV(S_j)$, and denote the associated generalized graphs by $T_j=T_{u_j,S_j}$.

Assume that 
\begin{equation}
  \sup_j \|u_j\|_{L^p_{\|S_j\|}}<\infty \quad\text{ for some }1<p<\infty\,, 
  \label{2eq:u_bd}
\end{equation}
and that for some $(n-1)$-rectifiable current $S$ and some $u\in BV(S)$
\begin{align}
  T_j  \wsto T=T_{u,S}\,, \label{2eq:ass-gcurr}\\
  \mass{S_j} \to \mass{S}\,. \label{2eq:Resh}
\end{align}
Then the strong measure-function pair convergence
\begin{equation}
  \big(\|S_j\|,u_j\big)  \to \big(\|S\|,u\big)
  \label{2eq:mfp-conv}
\end{equation}
holds in any $L^q$, $1\leq q<p$.
\end{proposition}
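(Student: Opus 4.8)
The plan is to verify directly, at each exponent $q\in[1,p)$, the two conditions in the definition of strong measure-function pair convergence of $(\|S_j\|,u_j)$ towards $(\|S\|,u)$; the crux is an exact description of the ``horizontal'' stratum $T_{j,0}$ of the generalized graph $T_j=T_{u_j,S_j}$. Writing $S_j=\curr{M_j,\tau_j,\rho_j}$ and $\Phi_j(x):=(x,u_j(x))$, I would first read off from \eqref{eq:7} that, as an $\R^n$-valued Radon measure on $\R^n_x\times\R_y$,
\[
  T_{j,0}=(\Phi_j)_\#\big({-}\tau_j\,\|S_j\|\big).
\]
Indeed, $\tau_j$ is a horizontal unit $(n-1)$-vector, so it pairs only with the $\d y$-free part of a test form, and \eqref{eq:7} then gives $T_{j,0}(\omega)=-\int_{M_j}\langle\omega(x,u_j(x)),\tau_j(x)\rangle\,\rho_j(x)\,\d\H^{n-1}(x)$ for every $(n-1)$-form $\omega$ on $\R^n_x\times\R_y$. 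As $\Phi_j$ is injective and $|\tau_j|\equiv1$, the total variation measure of this push-forward is $\|T_{j,0}\|=(\Phi_j)_\#\|S_j\|$, so that
\[
  \int\varphi\,\d\|T_{j,0}\|=\int\varphi(x,u_j(x))\,\d\|S_j\|(x)\quad\text{for }\varphi\in C^0_c(\R^n_x\times\R_y),\qquad\mass{T_{j,0}}=\mass{S_j};
\]
the same identities hold in the limit, whence $\mass{T_0}=\mass{S}$ and $\int\varphi\,\d\|T_0\|=\int\varphi(x,u(x))\,\d\|S\|$.

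Since stratification is linear and weak-$*$ continuous, \eqref{2eq:ass-gcurr} yields $T_{j,0}\wsto T_0$, while by the previous step and \eqref{2eq:Resh} we obtain $\mass{T_{j,0}}=\mass{S_j}\to\mass{S}=\mass{T_0}$. Weak-$*$ convergence of vector-valued measures together with convergence of their total masses forces weak-$*$ convergence of the associated total variation measures (apply weak-$*$ lower semicontinuity of the total variation to a bounded open set and to its complement). Hence $\|T_{j,0}\|\wsto\|T_0\|$, which by the identities above is precisely
\[
  \int\varphi(x,u_j(x))\,\d\|S_j\|(x)\ \longrightarrow\ \int\varphi(x,u(x))\,\d\|S\|(x)\qquad\text{for all }\varphi\in C^0_c(\R^n_x\times\R_y),
\]
i.e.\ the first defining property of strong MFP convergence. (This step uses neither the bound \eqref{2eq:u_bd} nor integer rectifiability of the $S_j$.)

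For the second property I would argue by tightness. From \eqref{2eq:ass-gcurr} one has $S_j=p_\#T_j\wsto S$, so \eqref{2eq:Resh} and the same fact give $\|S_j\|\wsto\|S\|$; since $\|S\|$ is finite, $\{\|S_j\|\}_j$ is tight. Fixing $q\in[1,p)$ and writing $S_{kj}=\{|x|\ge j\}\cup\{|u_k|\ge j\}$, Chebyshev's inequality and \eqref{2eq:u_bd} give $\|S_k\|(\{|u_k|\ge j\})\le Cj^{-p}$ with $C:=\sup_k\|u_k\|_{L^p(\|S_k\|)}^p$, so Hölder's inequality yields
\[
  \int_{S_{kj}}|u_k|^q\,\d\|S_k\|\ \le\ C^{q/p}\Big(\|S_k\|(\{|x|\ge j\})+Cj^{-p}\Big)^{1-q/p},
\]
which tends to $0$ as $j\to\infty$, uniformly in $k$, by tightness; lower semicontinuity also gives $\|u\|_{L^q(\|S\|)}<\infty$. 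Together with the previous paragraph this establishes \eqref{2eq:mfp-conv} in $L^q$ for every $1\le q<p$.

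The genuinely new point, and the main obstacle, is the identity $\mass{T_{j,0}}=\mass{S_j}$: it is what makes the hypothesis $\mass{S_j}\to\mass{S}$ exactly compensate the possible mass drop in the weak-$*$ limit $T_{j,0}\wsto T_0$ and upgrade it, in the spirit of Reshetnyak's continuity theorem, to convergence of the total variation measures — equivalently, to the oscillation-free limit $\int\varphi(x,u_j)\,\d\|S_j\|\to\int\varphi(x,u)\,\d\|S\|$. The rest is a routine Chebyshev/Hölder/tightness estimate.
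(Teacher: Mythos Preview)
Your proof is correct and takes a more direct route than the paper's. Both arguments start from the identity \eqref{eq:7}, which says $(T_{u,S})_0(\omega)=-\int\langle\omega(x,u(x)),\tau(x)\rangle\,d\|S\|(x)$, but exploit it differently. The paper tests this against forms $\varphi_\alpha(x)\psi(x,y)\,dx^\alpha$ to obtain weak measure-function pair convergence of $(\|S_j\|,\psi(\cdot,u_j)\vec S_j)$, then invokes Reshetnyak's continuity theorem on the $S_j$ to get strong convergence of $(\|S_j\|,\vec S_j)$, and finally combines these via Moser's product lemma \cite[Proposition~3.2]{moser2001generalization}, using $\vec S_j\cdot\vec S_j=1$ to strip off the orienting vector. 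You instead read the identity as a polar decomposition: since $\Phi_j(x)=(x,u_j(x))$ is injective and $\tau_j$ is a simple unit vector, one has $\|T_{j,0}\|=(\Phi_j)_\#\|S_j\|$ and $\mass{T_{j,0}}=\mass{S_j}$; then the elementary Reshetnyak-type fact ``weak-$*$ convergence plus convergence of total masses forces weak-$*$ convergence of total variation measures'' applied directly to the horizontal strata $T_{j,0}\wsto T_0$ yields the desired conclusion in a single step. This bypasses both the explicit Reshetnyak theorem and Moser's lemma, making your argument shorter and self-contained; the paper's detour, on the other hand, yields the intermediate strong convergence $(\|S_j\|,\vec S_j)\to(\|S\|,\vec S)$ as a by-product, though it is not used elsewhere. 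Your tightness estimate via Chebyshev and H\"older is essentially the same as the paper's.
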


\begin{proof}
We first deduce from \eqref{2eq:u_bd} for some $\Lambda>0$ and any $j\in\N$, $1\leq q<p$, $R>0$
\begin{equation*}
  \Lambda^p \geq \int |u_j|^p\d\|S_j\| \geq R^p\|S_j\|(\{|u_j|>R\})
\end{equation*}
and
\begin{align*}
  \int_{\{|u_j|>R\}}|u_j|^q\d\|S_j\|
  &\leq \Lambda^q \Big(\|S_j\|(\{|u_j|>R\})\Big)^{1-\frac{q}{p}} \\
  &\leq \Lambda^p R^{-p+q}\,\to\, 0 \quad(R\to\infty).
\end{align*}
Similarly,
\begin{align*}
  \int_{\{|x|>R\}}|u_j|^q\d\|S_j\|
  &\leq \Lambda^q \sup_{j\in\N}\Big(\|S_j\|(\{|x|>R\})\Big)^{1-\frac{q}{p}} \\
  \,\to\,0 \quad (R\to\infty)
\end{align*}
by Prokhorov's Theorem \cite[Theorem 8.6.2]{bogachev2007measure} and \eqref{2eq:ass-gcurr}, \eqref{2eq:Resh}.
This verifies the second condition in the definition of strong measure-function pair convergence in $L^q$.

\medskip
Next, for any $\alpha\in \Lambda(n,n-1)$, any $\varphi_\alpha\in C^0_c(\R^n)$, and any $\psi\in C^0_c(\R^n\times\R)$ we let $\eta_\alpha(x,y)=\varphi_\alpha(x)\psi(x,y)$ and deduce from \eqref{finmass-curr} and \eqref{eq:7}
\begin{align*}
  &\int_{\R^n} \angles{\vec{S}(x),\dx^\alpha} \varphi_\alpha(x)\psi(x,u(x))\,\d\|S\|(x)\\
  &\qquad =\angles{S,\psi(\cdot,u)\varphi_\alpha\dx^\alpha}\\
  &\qquad =  -\angles{T_{u,S},\eta_\alpha\dx^\alpha} \\
  &\qquad = -\lim_{j\to\infty} \angles{T_{u_j,S_j},\eta_\alpha\dx^\alpha}\\
  &\qquad = \lim_{j\to\infty} \angles{S_j,\psi(\cdot,u_j)\varphi_\alpha\dx^\alpha}
  = \lim_{j\to\infty }\int_{\R^n}\angles{\vec{S_j},\dx^\alpha} \varphi_\alpha(x)\psi(x,u_j(x))\,\d\|S_j\|(x).
\end{align*}
Since $\alpha\in \Lambda(n,n-1)$, $\varphi_\alpha$ are arbitrary we arrive at
\begin{equation}
  \Big(\psi(\cdot,u_j)\vec{S_j}, \|S_j\|\Big)
   \weakto \Big(\psi(\cdot,u)\vec{S}, \|S\|\Big).
   \label{2eq:Moser}
\end{equation}
Next, by the Reshetnyak continuity theorem (see \cite{reshetnyak1968weak}, \cite[Theorem 1.3]{spector2011simple}) and \eqref{2eq:Resh} we have for any $\varphi\in C^0_c(\R^n\times\sphere^{n-1})$
\begin{align*}
  \lim_{j\to\infty}\int_{\R^n}\varphi(\cdot,\vec{S_j})\d\|S_j\| 
  &= \int_{\R^n}\varphi(\cdot,\vec{S})\d\|S\|,
\end{align*}
which implies the strong measure-function pair convergence
\begin{equation*}
  \big(\vec{S_j},\|S_j\|\big) \to \big(\vec{S},\|S\|\big)
\end{equation*}
in any $L^r$, $1\leq r<\infty$. 
Together with \eqref{2eq:Moser} we deduce from \cite[Proposition 3.2]{moser2001generalization} that \begin{align*}
  \lim_{j\to\infty}\Big(\psi(\cdot,u_j),\|S_j\|\Big)&=\lim_{j\to\infty}\Big(\psi(\cdot,u_j)\vec{S_j}\cdot\vec{S_j}, \|S_j\|\Big)\\
  &= \Big(\psi(\cdot,u)\vec{S}\cdot\vec{S}, \|S\|\Big)
  = \Big(\psi(\cdot,u), \|S\|\Big)
\end{align*}
in the sense of weak measure-function-pair convergence in any $L^r$, $1\leq r<\infty$.
Since $\psi$ was arbitrary \eqref{2eq:mfp-conv} follows.
\end{proof}

\section{Compactness and lower semicontinuity for a Modica-Mortola functional on varying surfaces}
\label{sec:3}

In this section we provide a lower bound estimate for a sequence of diffuse perimeter approximations on varying surfaces.
We first define a generalization of the Modica--Mortola functional to this situation.

Fix a nonnegative continuous double-well potential $W$ such that $\{W=0\}=\{0,1\}$ and such that for some $T,c>0$, $p\geq 2$
\begin{equation}
  c|t|^p \leq W(t)\leq \frac{1}{c}|t|^p \quad\text{ for all }|t|\geq T\,.
  \label{3eq:dw}
\end{equation}
Below it will be convenient to fix a first integral of $\sqrt{W}$,
\begin{equation}\label{eq:12}
  \dwp(r)=\int_{0}^r \sqrt{W(t)}\d t\quad\text{ for }r\in\R
\end{equation}
and to define the surface tension constant
\begin{equation}
  k=\int_{0}^{1}\sqrt{W(r)}\d r\,.
  \label{3eq:k}
\end{equation}

\begin{definition}[Generalized Modica--Mortola functional]
  \label{3def:MM}
For $\eps>0$, a rectifiable current $S_\e=\curr{M_\e,\tau_\e,\rho_\e}$ of finite mass in  $\R^n$ and a function $u_\eps\in H^{1,p}(S_\e)$, we define the Modica--Mortola-type functional
\begin{equation}
  I_\eps(u_\eps,S_\e)=\int_{\R^n} \big(\e|\nabla_{\mu_\e} u_\e|^2+\e^{-1}W(u_\e)\big) \d\mu_\eps\,,
  \label{3eq:MM}
\end{equation}
where $\mu_\e=\|S_\e\|$.
\end{definition}

In the following we consider a more restrictive setting where $S_\eps$ is the current associated to  the reduced boundary of a finite perimeter set.
In the sharp interface limit of phase fields we will obtain phase indicator functions with a generalized $BV$-regularity.

In Proposition \ref{pro:countval} below we will justify the following notion of jump sets for phase indicator functions.
\begin{definition}[Generalized jump set]\label{3def:Ju}
Let $S\in \sfI_m(\R^n)$, $S=\curr{M,\tau,\rho}$ and $u\in BV(S)$ be given with $u(x)\in \{0,b\}$ for almost all $x\in M$ and some $b>0$.
Then we write
\[
  J_u:= \supp \left(\partial\curr{u^{-1}(b),\tau,\rho}\right)
\]
and call this set the jump set of $u$.
\end{definition}

\begin{proposition}
  \label{pro:countval}
  Consider $S\in \sfI_{m}(\R^n)$, $S=\curr{M,\tau,\rho}$ and $u\in BV(S)$ with $u(x)\in Q$ for almost all $x\in M$, $Q\subset\R$ at most countable.
  Then we have that
  \begin{equation}
    \mass{T_{u,S}}=\mass{\partial \Sigma_{u,S}\ecke\d y}+\mass{S}\,.
    \label{3eq:countval}
  \end{equation}
  In the case $Q=\{0,b\}$, $b>0$ as in Definition \ref{3def:Ju} we obtain that the generalized jump set $J_u$ is $(m-1)$-rectifiable and that
  \begin{equation}
    \mass{T_{u,S}}-\mass{S}=b\H^{m-1}(J_u)\,.
    \label{3eq:countval2}
  \end{equation}
\end{proposition}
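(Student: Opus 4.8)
The plan is to compute the mass of the generalized graph $T_{u,S}$ by exploiting the fact that $u$ takes values in a countable set, so that the horizontal (stratum-$0$) and vertical (stratum-$1$) parts of $T_{u,S}$ are carried by mutually orthogonal rectifiable pieces. First I would invoke Remark \ref{rem:TuSrect}: since $S\in\sfI_m(\R^n)$ and $u\in BV(S)$, the current $T_{u,S}=-\partial\Sigma_{u,S}+S\otimes\delta_0$ is integer rectifiable, $T_{u,S}=\curr{\calR,\xi,\theta}$ for some $m$-rectifiable $\calR\subset\R^n_x\times\R_y$. The key geometric observation is that, because $u$ is $Q$-valued with $Q$ countable, the vertical part $-\partial\Sigma_{u,S}\ecke\d y$ is (up to $\calH^m$-null sets) supported on the union of horizontal slices $M\cap\{u=q\}$ translated to height $q$, where the tangent plane is parallel to $\R^n_x$; hence on the carrier of the stratum-$1$ part the orientation $m$-vector $\xi$ lies in $\Lambda_{m-1}(\R^n_x)\wedge\Lambda_1(\R_y)$, i.e. $\xi=\xi_1$ and $\xi_0=0$ there. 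Conversely, the carrier of $T_{u,S}\ecke\dx^\alpha$ is, by formula \eqref{eq:7}, exactly the graph $\{(x,u(x)):x\in M\}$, where $\xi$ differs: there $\xi_0\ne 0$. These two carriers are $\calH^m$-disjoint (they meet only over the jump set, an $\calH^m$-null set since it is $(m-1)$-dimensional).

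Given this decomposition, I would argue as follows. Since $\mass{T_{u,S}}=\int_{\calR}\theta\,\d\calH^m$ and $\calR$ splits into the graph part $\calG$ and the vertical part $\calV$ with $\calH^m(\calG\cap\calV)=0$, we get $\mass{T_{u,S}}=\int_{\calG}\theta\,\d\calH^m+\int_{\calV}\theta\,\d\calH^m$. On $\calG$ the current $T_{u,S}$ restricted to $m$-forms of type $\dx^\alpha$ recovers, via \eqref{eq:7} and the area formula for the Lipschitz-on-each-slice map $x\mapsto(x,u(x))$ (which is an isometry on each slice $\{u=q\}$ since $u$ is locally constant there), exactly $S\otimes\delta_0$ pushed to the graph; this contributes $\mass{S}=\int_M\rho\,\d\calH^m$. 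On $\calV$ the current equals $-\partial\Sigma_{u,S}\ecke\d y$ and contributes $\mass{\partial\Sigma_{u,S}\ecke\d y}$. That yields \eqref{3eq:countval}. For the second part, with $Q=\{0,b\}$, we have $E_{u,S}=\{(x,y):x\in u^{-1}(b),\ 0<y<b\}$ with constant orientation $e_y\wedge\tau$ and multiplicity $\rho$, so $\Sigma_{u,S}=\curr{u^{-1}(b),\tau,\rho}\times\curr{(0,b)}$ (a product current), and $\partial\Sigma_{u,S}\ecke\d y = \pm\,\partial\curr{u^{-1}(b),\tau,\rho}\otimes(\delta_b-\delta_0)$ up to the horizontal boundary term which is killed by the $\ecke\d y$ contraction. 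Hence $\mass{\partial\Sigma_{u,S}\ecke\d y}=2\,\mass{\partial\curr{u^{-1}(b),\tau,\rho}}$ — but wait, the two slices $y=0$ and $y=b$ each carry a copy, while $J_u$ is the support of a single copy $\partial\curr{u^{-1}(b),\tau,\rho}$; a careful bookkeeping of the interval $(0,b)$ of length $b$ shows the vertical faces of $E_{u,S}$ over $J_u$ have total $\calH^m$-measure $b\,\calH^{m-1}(J_u)$ counted with multiplicity $\rho$, so $\mass{\partial\Sigma_{u,S}\ecke\d y}=b\,\calH^{m-1}(J_u)$ once the density $\rho$ is absorbed into the definition of $\curr{u^{-1}(b),\tau,\rho}$ and one uses that $\partial\curr{u^{-1}(b),\tau,\rho}$ is integer rectifiable (boundary rectifiability theorem, \cite[Theorem 30.3]{Simo83}), giving its $(m-1)$-rectifiability and thus that of $J_u=\supp\partial\curr{u^{-1}(b),\tau,\rho}$.

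The main obstacle I anticipate is making the slicing/product-current argument fully rigorous when $\rho$ is a nonconstant integer multiplicity and $u^{-1}(b)$ is merely rectifiable, not smooth: one must justify that $\Sigma_{u,S}$ genuinely factors as a product current $\curr{u^{-1}(b),\tau,\rho}\times\curr{(0,b)}$ and compute its boundary, controlling the interplay between $\partial$ acting on the base and the fact that $\rho$ need not be constant (so the "interior" boundary contributions where $\rho$ jumps must be shown to be purely horizontal and hence annihilated by $\ecke\d y$). This is where the hypothesis $S\in\sfI_m(\R^n)$ with its integrality, together with the structure of $E_{u,S}$ as a genuine cylinder over the base in the $y$-direction, does the work: the orientation $\alpha=e_y\wedge\tau$ and multiplicity $\theta=\rho$ are independent of $y$, so $\Sigma_{u,S}=S\ecke(u^{-1}(b))\,\times\,\curr{(0,b)}$ exactly, and the product boundary formula $\partial(A\times B)=\partial A\times B+(-1)^{\dim A}A\times\partial B$ gives the claim after contraction with $\d y$. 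A secondary technical point is verifying that the graph part and vertical part of $T_{u,S}$ really do not overlap on a set of positive $\calH^m$-measure, which follows because over any fixed $x\in M$ the graph point sits at height $u(x)$ while the vertical part over $x$ (if $x$ is a jump point) is a segment, and the set of jump points is $(m-1)$-dimensional, hence $\calH^m$-negligible; this is precisely what Definition \ref{3def:Ju} isolates.
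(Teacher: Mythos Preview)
Your overall strategy for \eqref{3eq:countval} matches the paper's: write $T_{u,S}=\curr{\calR,\xi,\theta}$ and show that $\{\xi_0\neq0\}$ and $\{\xi_1\neq0\}$ are $\calH^m$-disjoint, so $\mass{T_{u,S}}=\mass{(T_{u,S})_0}+\mass{(T_{u,S})_1}$, which gives \eqref{3eq:countval} via \eqref{eq:7}. The paper makes this rigorous by invoking \cite[Lemma~2.7]{anzellotti1996bv}, which says that $\calR_1:=\{\xi_0\neq0\}$ lies, up to an $\calH^m$-null set, in $\bigcup_{r\in Q}M\times\{r\}$, whence $\xi_1=0$ there. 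You have the right geometric picture but a labeling slip: you write that the \emph{stratum-$1$} part is carried by horizontal slices with horizontal tangent plane, and infer $\xi=\xi_1$ there. That is backwards---on a horizontal slice the tangent $m$-vector lies in $\Lambda_m(\R^n_x)$, so $\xi_1=0$, not $\xi_0=0$. What holds (and what \eqref{eq:7} shows directly) is that the \emph{stratum-$0$} part is carried by the graph, which for countably-valued $u$ is a union of horizontal slices; the stratum-$1$ part lives on the vertical walls over the jump set. Once the labels are swapped your argument is the paper's.

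For \eqref{3eq:countval2} the paper takes a different route than you: it applies the coarea formula \cite[Proposition~2.13(ii)]{anzellotti1996bv},
\[
\mass{\partial\Sigma_{u,S}\ecke\d y}=\int_0^\infty\mass{\partial(S\ecke\{u>s\})}\,\d s=b\,\mass{\partial\curr{u^{-1}(b),\tau,\rho}},
\]
and then boundary rectifiability. Your product-current argument $\Sigma_{u,S}=(S\ecke u^{-1}(b))\times\curr{(0,b)}$ with the Leibniz boundary formula is a legitimate alternative, more hands-on for the two-valued case. Your first guess $\partial\Sigma_{u,S}\ecke\d y=\pm\partial\curr{u^{-1}(b),\tau,\rho}\otimes(\delta_b-\delta_0)$ is not right---the contraction $\ecke\d y$ annihilates the horizontal caps $A\times\partial\curr{(0,b)}$ and keeps the vertical wall $\partial A\times\curr{(0,b)}$---but your self-correction to the wall of height $b$ over $J_u$, giving $b\,\calH^{m-1}(J_u)$, is exactly the answer. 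The coarea route has the advantage that it generalizes without change to arbitrary countable $Q$ and sidesteps the product-formula bookkeeping (including the interaction with nonconstant $\rho$) that you flag as the main obstacle.
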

  
\begin{proof}
Writing $T_{u,S} = \curr{\calR,\xi,\theta}$ as in Remark \ref{rem:TuSrect}, we may set
\[
\calR_1=\{(x,y)\in\calR:\xi_0(x,y)\neq 0\}\,,\quad\calR_0=\calR\setminus\calR_1\,.
\]
Then by \cite[Lemma 2.7]{anzellotti1996bv} we may write
\[
\calR_1=\calR_1^0\cup \bigcup_{r\in Q} \calR_1^{(r)}\,,
\quad\Ha^m(\calR_1^0)=0,\, \calR_1^{(r)}\subset M\times\{r\} \text{ for }r\in Q\,,
\]
and hence $\xi_1=0$ on $\calR_1$. 
This translates into the following identities for the stratification of $T_{u,S}$,
\[
  \begin{split}
(T_{u,S})_0&=T_{u,S}\ecke\{\xi_1=0\}\\
(T_{u,S})_1&=T_{u,S}\ecke\{\xi_0=0\}\,.
\end{split}
\]
Hence $\mass{T_{u,S}}=\mass{(T_{u,S})_0}+\mass{(T_{u,S})_1}$ and \eqref{3eq:countval} follows from \eqref{eq:7}.

By the coarea formula for $BV(S)$ (see \cite[Proposition 2.13 (ii)]{anzellotti1996bv}) we have that
\begin{equation}\label{eq:2}
  \begin{split}
    \mass{\partial \Sigma_{u,S}\ecke\d y}
    &=\int_0^\infty \mass{\partial \left(S\ecke\{u>s\}\right)}\d s\\
    &= b\mass{\partial \curr{u^{-1}(b),\tau,\rho}}\,.
  \end{split}
\end{equation}
By the boundary rectifiability theorem \cite[Theorem 30.3]{Simo83}, 
the generalized jump set $J_u=\supp\partial \curr{u^{-1}(b),\tau,\rho}$ is $(m-1)$-rectifiable, and 
\begin{equation*}
  \mass{\partial \curr{u^{-1}(b),\tau,\rho}}=\H^{m-1}(J_u)\,.
\end{equation*}
Together with \eqref{3eq:countval}, \eqref{eq:2} this implies \eqref{3eq:countval2}.
\end{proof}
  
The main result in this section provides a lower bound estimate analogous to the $\liminf$ estimate in the classical Modica--Mortola Gamma-convergence statement.

\begin{theorem}[Lower bound estimate]
\label{thm:momolb}
Consider a double-well potential $W$ as above with \eqref{3eq:dw} for some $p\geq 2$.
Let a family $(E_\eps)_\eps$ of finite perimeter sets in $\R^n$ with associated perimeter currents $S_\e=\curr{\partial_* E_\e,*\nu_\e,1}$, where $\nu_\e:\partial_* E_\e\to\sphere^{n-1}$ denotes the inner unit normal of $E_\e$,
 $\mu_\e=\|S_\e\|$, and a sequence $(u_\eps)_\eps$ in $H^{1,p}(S_\e)$ be given.

Assume that for some set $E$ of finite perimeter $\chi_{E_\e}\to \chi_{E}$ strictly in $BV(\R^n)$, that is
\begin{equation}
  \chi_{E_\e}\to \chi_{E}\,\text{ in }L^1(\R^n),\quad
  \nabla \chi_{E_\e}\wsto \nabla \chi_{E},\quad\text{and } 
  \lim_{\e\to 0}\H^{n-1}(\partial_* E_\e)=\H^{n-1}(\partial_* E)\,.
  \label{3eq:BV-strict}
\end{equation}
Let $\nu=\nu_E:\partial_* E\to\sphere^{n-1}$ denote the inner unit normal of $E$, and set $S=[\![\partial_*E,*\nu_E,1]\!]$, $\mu=\H^{n-1}\ecke\partial_* E$.

Let us further assume that for some $\Lambda>0$
\begin{equation}
  I_\eps(u_\eps,S_\e) < \Lambda.
  \label{3eq:ass-bound}
\end{equation}
Then there exists $u\in BV(S)$ and a subsequence $\eps\to 0$ such that the following holds:
\begin{align}
  u(x)\in\{0,1\}\text{ for }\H^{n-1}\text{-almost all } x\in \partial_*E,
  \label{3eq:thm1-1}\\
  \big(\mu_\eps,u_\eps\big) \to \big(\mu,u\big)
  \quad\text{ as measure-function pairs in }L^q
  \label{3eq:thm1-2}
\end{align}
for any $1\leq q<p$.

Moreover, $\curr{\{u=1\}, *\nu,1}$ is an integral $(n-1)$-current and we have the lower bound estimate
\begin{equation}
  \liminf_{\e\to 0} I_\eps(u_\eps,S_\e) \geq 2k\H^{n-2}\left(J_u\right),
  \label{3eq:lsc}
\end{equation}
with the generalized jump set $J_u$ as in Definition \ref{3def:Ju}.
\end{theorem}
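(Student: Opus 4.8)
The plan is to reduce the statement to the classical Modica--Mortola lower bound by a localization-and-slicing argument, using the measure-function pair convergence provided by Proposition \ref{2pro:mfp-convergence} to control the phase field in the limit. First I would establish compactness: from the energy bound \eqref{3eq:ass-bound} and the growth condition \eqref{3eq:dw} one gets a uniform $L^p(\mu_\e)$ bound on $u_\e$ (the potential term controls $u_\e$ where $|u_\e|\geq T$, the measures $\mu_\e$ have uniformly bounded mass by strict $BV$-convergence), and also $\int W(u_\e)\,d\mu_\e\to 0$. The strict $BV$-convergence $\chi_{E_\e}\to\chi_E$ gives $\mu_\e\wsto\mu$ with $\mass{\mu_\e}\to\mass{\mu}$, and the associated currents $S_\e=\curr{\partial_*E_\e,*\nu_{E_\e},1}$ converge with convergence of masses. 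I would then invoke the compactness part of \cite{anzellotti1996bv} (using precisely the strict convergence hypothesis, verified via the $L^1(\mu_\e)$-bound on $u_\e$ and $\mass{S_\e}\to\mass{S}$) to extract a subsequence and a limit $u\in BV(S)$ with $T_{u_\e,S_\e}\wsto T_{u,S}$; Proposition \ref{2pro:mfp-convergence} then yields \eqref{3eq:thm1-2}. That $u$ takes values in $\{0,1\}$ a.e.\ follows because $\int W(u_\e)\,d\mu_\e\to 0$ forces, through the strong measure-function pair convergence, $\int W(u)\,d\mu=0$, hence $W(u)=0$ $\mu$-a.e.

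Next I would prove the lower bound \eqref{3eq:lsc}. The standard device is to compose with the potential primitive: set $w_\e=\dwp\circ u_\e$ (recall $\dwp(r)=\int_0^r\sqrt{W}$), so that by the chain rule for $H^{1,p}_{\mu_\e}$ and the elementary inequality $\e|\nabla_{\mu_\e}u_\e|^2+\e^{-1}W(u_\e)\geq 2\sqrt{W(u_\e)}|\nabla_{\mu_\e}u_\e|\geq 2|\nabla_{\mu_\e}w_\e|$ we obtain
\begin{equation*}
  I_\e(u_\e,\mu_\e)\geq 2\int_{\R^n}|\nabla_{\mu_\e}w_\e|\,d\mu_\e.
\end{equation*}
Since $(\mu_\e,u_\e)\to(\mu,u)$ strongly as measure-function pairs and $\dwp$ is continuous, $(\mu_\e,w_\e)\to(\mu,w)$ strongly with $w=\dwp\circ u$; because $u\in\{0,1\}$ $\mu$-a.e., $w$ takes the two values $0$ and $\dwp(1)=k$, and $w$ is (essentially) a $BV$ function over $S$ of the type in Definition \ref{3def:Ju} with $b=k$. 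The remaining task is a lower semicontinuity statement: the right-hand side above, a weighted total variation of $w_\e$ over the varifold/current $S_\e$, must be bounded below in the limit by $\mass{\partial\curr{w^{-1}(k),*\nu,1}}=k\,\H^{n-2}(J_u)$ (using Proposition \ref{pro:countval} with $m=n-1$, $b=k$). I would obtain this by testing with ambient vector fields: for $\varphi\in C^1_c(\R^n;\R^n)$ the quantity $\int \mathrm{div}_{\mu_\e}\varphi\, w_\e\,d\mu_\e$ can be rewritten, via the first-variation/divergence structure, in terms of $\nabla_{\mu_\e}w_\e$ plus a first-variation term; here one uses that $S_\e=\partial\curr{E_\e}$ so $\partial S_\e=0$ and the generalized divergence theorem on $\partial_*E_\e$ has no boundary contribution, and the zero-mean-curvature-in-the-distributional-sense is replaced by the exact identity $\int_{\partial_*E_\e}\mathrm{div}_{\partial_*E_\e}\varphi\,d\H^{n-1}=0$ for $\varphi$ tangential — more robustly, one passes through the current $T_{w_\e,S_\e}$ whose mass is exactly controlled, and uses lower semicontinuity of mass under weak-$*$ convergence of currents together with \eqref{3eq:countval} from Proposition \ref{pro:countval}.

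In fact the cleanest route is to run the argument entirely at the level of currents. From $I_\e(u_\e,\mu_\e)\geq 2\int|\nabla_{\mu_\e}w_\e|\,d\mu_\e$ and the definition of $T_{w_\e,S_\e}$ one shows $\int|\nabla_{\mu_\e}w_\e|\,d\mu_\e\geq \mass{\partial\Sigma_{w_\e,S_\e}\ecke\d y}$ — this is because the vertical part of the generalized graph is precisely the graph of $\nabla w_\e$ over $S_\e$, so its mass is $\int\sqrt{1+|\nabla_{\mu_\e}w_\e|^2}-$type expression dominated appropriately; then $\mass{T_{w_\e,S_\e}}\leq \mass{\partial\Sigma_{w_\e,S_\e}\ecke\d y}+\mass{S_\e}$ and in the limit, by Proposition \ref{2pro:mfp-convergence} (applied to $w_\e$, which satisfies the $L^p$-bound inherited from $u_\e$ via the growth of $\dwp$) we get $T_{w_\e,S_\e}\wsto T_{w,S}$, hence by lower semicontinuity of mass and Proposition \ref{pro:countval},
\begin{equation*}
  \liminf_{\e\to 0}2\int|\nabla_{\mu_\e}w_\e|\,d\mu_\e\;\geq\;2\big(\mass{T_{w,S}}-\mass{S}\big)\;=\;2k\,\H^{n-2}(J_u),
\end{equation*}
noting $J_w=J_u$. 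Finally, $\curr{\{u=1\},*\nu,1}$ is integral because it is $\curr{u^{-1}(1),\tau,\rho}$ with $\rho\equiv1$, integer multiplicity, and its boundary $\partial\curr{u^{-1}(1),*\nu,1}$ is integer rectifiable by the boundary rectifiability theorem \cite[Theorem 30.3]{Simo83}, exactly as in the proof of Proposition \ref{pro:countval}. The main obstacle I anticipate is the slightly delicate interplay between the three notions of gradient/total variation — $\nabla_{\mu_\e}$ in the sense of \cite{bouchitte2001convergence}, the tangential structure of the current $S_\e$, and the vertical stratification $\partial\Sigma_{w_\e,S_\e}\ecke\d y$ — i.e., rigorously justifying that $\int|\nabla_{\mu_\e}w_\e|\,d\mu_\e$ really dominates $\mass{\partial\Sigma_{w_\e,S_\e}\ecke\d y}$ and that the chain rule $w_\e=\dwp\circ u_\e$ is valid in $H^{1,p}_{\mu_\e}$ with $\nabla_{\mu_\e}w_\e=\sqrt{W(u_\e)}\nabla_{\mu_\e}u_\e$; the potential failure of a full chain rule in these generalized Sobolev spaces means I would need to approximate $u_\e$ by smooth ambient functions and pass to the limit carefully.
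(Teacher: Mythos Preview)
There is a genuine gap in your compactness step. You propose to apply the Anzellotti--Delladio--Scianna closure theorem directly to the graph currents $T_{u_\eps,S_\eps}$, but these do \emph{not} have uniformly bounded mass: the energy bound only gives $\int \eps|\nabla_{\mu_\eps}u_\eps|^2\,d\mu_\eps\leq\Lambda$, so by Cauchy--Schwarz $\int|\nabla_{\mu_\eps}u_\eps|\,d\mu_\eps$ is at best $O(\eps^{-1/2})$, and hence $\mass{T_{u_\eps,S_\eps}}\sim\int\sqrt{1+|\nabla_{\mu_\eps}u_\eps|^2}\,d\mu_\eps$ may blow up as $\eps\to 0$. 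Without a uniform mass bound you cannot invoke Federer--Fleming compactness, and without a limit current you cannot feed Proposition~\ref{2pro:mfp-convergence} --- which, incidentally, takes graph-current convergence as \emph{input} and returns measure-function pair convergence as output, not the reverse as you write in your ``cleanest route'' paragraph. The obstacle you anticipated (the chain rule) is real but secondary; the missing idea is where to place the composition with $\dwp$.

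The fix, which is what the paper does, is to compose with $\dwp$ \emph{before} the compactness step rather than after: set $v_\eps=\dwp\circ u_\eps$ and work with $T_\eps=T_{v_\eps,S_\eps}$ from the outset. After first reducing to $u_\eps\in\calD(\R^n)$ (which also legitimizes the chain rule $\nabla_{\mu_\eps}v_\eps=\sqrt{W(u_\eps)}\,\nabla_{\mu_\eps}u_\eps$), one has $T_\eps=(\Phi_\eps)_\#S_\eps$ with $\Phi_\eps(x)=(x,v_\eps(x))$, so
\[
\mass{T_\eps}=\int\sqrt{1+|\nabla_{\mu_\eps}v_\eps|^2}\,d\mu_\eps\leq\mass{S_\eps}+\int|\nabla_{\mu_\eps}v_\eps|\,d\mu_\eps\leq\mass{S_\eps}+\tfrac12 I_\eps(u_\eps,\mu_\eps),
\]
now uniformly bounded. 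Since $\partial T_\eps=0$, current compactness yields a limit $T$; the closure theorem \cite[Theorem~4.3]{anzellotti1996bv} (using $\mass{S_\eps}\to\mass{S}$) identifies $T=T_{v,S}$ for some $v\in BV(S)$; Proposition~\ref{2pro:mfp-convergence} then gives $(\mu_\eps,v_\eps)\to(\mu,v)$, whence $(\mu_\eps,u_\eps)\to(\mu,u)$ with $u:=\dwp^{-1}(v)$. The liminf then follows cleanly from lower semicontinuity of mass together with the \emph{convergence} $\mass{S_\eps}\to\mass{S}$: by Proposition~\ref{pro:countval}, $k\,\H^{n-2}(J_u)=\mass{T_{v,S}}-\mass{S}\leq\liminf_{\eps\to 0}\big(\mass{T_\eps}-\mass{S_\eps}\big)\leq\liminf_{\eps\to 0}\tfrac12 I_\eps(u_\eps,\mu_\eps)$.
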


\begin{proof}
We first prove that we can assume $u_\eps\in\calD(\R^n)$ for all $\eps>0$.
In fact, by the definition of $H^{1,p}(S_\e)$ we can approximate $(u_\eps)_\eps$ by a family $(\tilde u_\eps)_\eps$ in $\calD(\R^n)$ such that \eqref{3eq:ass-bound} holds for $(\tilde u_\eps)_\eps$, such that
\begin{equation*}
  \liminf_{\e\to 0} I_\eps(\tilde u_\eps,S_\eps)
  =\liminf_{\e\to 0} I_\eps(u_\eps,S_\eps)\,,
\end{equation*}
and
\begin{equation*}
   \|u_\eps-\tilde u_\eps\|_{L^p_{\mu_\eps}(\R^n)}
   +\|\nabla_{\mu_\eps}u_\eps-\nabla_{\mu_\eps} \tilde u_\eps\|_{L^p_{\mu_\eps}(\R^n;\R^n)}
   \to 0 \quad (\eps\to 0)\,.
\end{equation*}
Let us assume we have proved $(\mu_\eps,\tilde u_\eps)\to (\mu, u)$ as measure-function pairs in $L^q$ for some $u$ as in the statement of the present theorem.
For any Lipschitz-continuous function $\psi\in C^0_c(\R^n\times\R)$ we then deduce
\begin{equation*}
  \Big|\int \big(\psi(\cdot,u_\eps)-\psi(\cdot,\tilde u_\eps)\big)\,d\mu_\eps\Big|
  \leq \|\psi\|_{C^{0,1}(\R^{n+1})} \|u_\eps-\tilde u_\eps\|_{L^p_{\mu_\eps}(\R^n)}\mu_\eps(\R^n)^{1-\frac{1}{p}}\,\to\, 0
\end{equation*}
with $\eps\to 0$.
An approximation argument yields for all $\psi\in C^0_c(\R^n\times\R)$
\begin{equation*}
  \lim_{\eps\to 0}\int  \psi(\cdot,u_\eps)\,d\mu_\eps
  =\lim_{\eps\to 0}\int  \psi(\cdot,\tilde u_\eps)\,d\mu_\eps
  =\int  \psi(\cdot,u)\,d\mu
\end{equation*}
and therefore the strong measure-function pair convergence of the original sequence $(\mu_\eps,u_\eps)$ to $(\mu,u)$.

Therefore it is sufficient to prove the Theorem for sequences $(u_\eps)_\eps$ in $\calD(\R^n)$, which we assume in the remainder of the proof.

\medskip
We now restrict ourselves to a subsequence $\eps\to 0$ (not relabeled) such that $I_\eps(u_\eps,S_\eps)$ converges to the $\liminf$ in \eqref{3eq:lsc}.
Moreover, we consider the modified phase fields $v_\eps:=\dwp\circ u_\eps$ and the generalized graphs $T_\eps:=T_{v_\eps,S_\e}$.

Since $u_\eps\in\calD(\R^n)$ we can apply the chain rule $\nabla_{\mu_\eps}v_\eps=\sqrt{W(u_\eps)}\nabla_{\mu_\eps}u_\eps$ and obtain the representation 
\begin{equation*}
  T_\eps = (\Phi_\e)_\# (S_\e),\quad
  \Phi_\e(x):=(x,v_\eps(x))\text{ for }x\in \partial_* E_\eps\,.
\end{equation*}

Therefore we may apply the usual Modica-Mortola trick to obtain
\begin{equation}\label{eq:17}
  \begin{split}
    \int_{\partial_* E_\e} |\nabla_{\mu_\e}v_\eps|\d\H^{n-1}&\leq 
    \int_{\partial_* E_\e} \sqrt{W(u_\e)}|\nabla_{\mu_\e} u_\e|\d\H^{n-1}\\
    &\leq \frac12 I_\eps(u_\eps,S_\eps)\leq \frac{1}{2}\Lambda\,.
  \end{split}   
\end{equation}
Furthermore we have
\begin{equation}\label{3eq:mass-bound}
  \begin{split}
    \mass{T_\eps} =\mass{(\Phi_\e)_\# (S_\e)}
    &=\int_{\partial_*E_\e}\sqrt{1+|\nabla_{\mu_\e}v_\eps|^2}\d\H^{n-1}\\
    &\leq \H^{n-1}(\partial_*E_\e)+ \int_{\partial_* E_\e} |\nabla_{\mu_\e}v_\eps|\d\H^{n-1}\\
    &\leq \H^{n-1}(\partial_*E_\e)+ \frac12 I_\eps(u_\eps,S_\eps)\leq C(\Lambda)
  \end{split}
\end{equation}
and
\begin{equation}
  \mass{T_{\eps,0}\ecke q} = \|v_\eps\|_{L^1_{\mu_\eps}}
  \leq C(\Lambda,W).
  \label{3eq:L1-bound}
\end{equation}
This implies that the currents $T_\eps$ are uniformly bounded in mass and that $(v_\eps)_\eps$ is bounded in the desired $L^1$ sense.

The strict $BV$-convergence \eqref{3eq:BV-strict} yields
\begin{equation}
  \lim_{\e\to 0} \mass{S_\e}=\lim_{\e\to 0}\H^{n-1}(\partial_* E_\e)=\H^{n-1}(\partial_* E)=\mass{S}\,.
  \label{3eq:Se-conv}
\end{equation}

Since $\partial T_\eps=0$, we may use the compactness theorem for integral currents \cite[Theorem 27.3]{Simo83}, to obtain a limit $T$ of $T_\eps$. 

By \eqref{3eq:mass-bound}, \eqref{3eq:L1-bound} and \eqref{3eq:Se-conv} we may use \cite[Theorem 4.3]{anzellotti1996bv} to conclude that there exists $v\in BV(S)$ such that
\[
  T=T_{v,S}\,.
\]

Furthermore, the assumptions on $W$ and the boundedness of $I_\eps$ induce that $(v_\eps)_\eps$ is bounded in $L^p_{\mu_\eps}(\R^n)$.
Together with the strict $BV$-convergence of $\chi_{E_\eps}$ and the convergence $T_\eps \wsto T_{v,S}$ we obtain by Proposition \ref{2pro:mfp-convergence} that we have the strong measure-function pair convergence
\begin{equation*}
  \big(\mu_\eps,v_\eps\big) \to \big(\mu,v\big)\quad\text{ in any }L^q,\,1\leq q<p.
  \label{3eq:mfp-v}
\end{equation*}
Since $\dwp$ is continuous and $u_\eps$ is uniformly bounded in $L^p_{\mu_\eps}(\R^n)$ by the boundedness of $I_\eps$ we deduce that \eqref{3eq:thm1-2} holds for $u:=\dwp^{-1}(v)$.

Using the measure-function pair convergence \eqref{3eq:thm1-2} and the energy bound \eqref{3eq:ass-bound} it follows that for any $\eta\in C^0_c(\R^n\times\R)$
\begin{equation*}
  \int_{\R^n} \eta(\cdot,u)W(u)\,\d\mu
  =\lim_{\eps\to 0} \int_{\R^n} \eta(\cdot,u_\eps)W(u_\eps)\,\d\mu_\eps =0\,.
\end{equation*}
Since $\eta$ was arbitrary this yields $W(u)=0$ $\mu$-almost everywhere.
Hence \eqref{3eq:thm1-1} holds, which implies that $v(x)\in\{0,k\}$ for $\H^{n-1}$ almost all $x\in\partial E_*$.

We then also have $u=\frac{1}{k}v\in BV(S)$.

\medskip

We are now in a position to prove the $\liminf$-inequality \eqref{3eq:lsc}. 
By \eqref{3eq:countval2} in Proposition \ref{pro:countval} we deduce
\begin{equation*}
  k\H^{n-2}(J_u)=\H^{n-2}(J_v)=\mass{T_{v,S}}-\mass{S}\,.
\end{equation*}
Noting  that mass is weakly lower semicontinuous under convergence as currents, and by \eqref{3eq:Se-conv} we get 
\[
\mass{T_{v,S}}-\mass{S}\leq \liminf_{\e\to 0}\mass{T_\eps}-\mass{S_\e}\,.
\]
By \eqref{3eq:mass-bound}, 
\[
  \begin{split}
\liminf_{\e\to 0}\mass{T_\eps}-\mass{S_\e}
&\leq \liminf_{\eps\to 0}\Big(\big(\H^{n-1}(\partial_* E_\eps)+\frac{1}{2}I_\eps(u_\eps,S_\eps)\big) - \H^{n-1}(\partial_* E_\eps)\Big)\\
&=\liminf_{\eps\to 0} \frac{1}{2}I_\eps(u_\eps,S_\eps)\,\,.
\end{split}
\]
By the above chain of equalities and inequalities, we have proved  \eqref{3eq:lsc}.
\end{proof}

\begin{remark}
For a corresponding Gamma-convergence result one would need to complement Theorem \ref{thm:momolb} by an upper bound estimate.
It is rather straightforward to prove such a statement in a more regular setting:
Assume that $M\subset \R^n$ is a smooth oriented $n-1$-dimensional submanifold,  $S=[\![M,*\nu,1]\!]$, $u\in BV(S)$ with $u\in \{0,1\}$ $\H^{n-1}\ecke M$ almost everywhere, $J_u$ a smooth curve on $M$. Then there exists a sequence $(u_\e)_{\e}$ in $H^{1,p}(S)$ such that $T_{u_\e,S}\wsto T_{u,S}$ and 
\[
\limsup_{\e\to 0} I_\e(u_\e,S)\leq 2k\H^{n-2}(J_u)\,.
\]  
Indeed, one can adapt the proof from \cite[Proposition 2]{Modi87} rather straightforwardly to obtain the above statement.

\medskip
The proof of the upper bound becomes non-trivial as soon as one allows for non-smooth surfaces $M=\supp S$. 
We do not discuss this question here and leave it open for future research.
\end{remark}

\begin{remark}[Mass constraint]
We remark that that one can  also include a constraint on the mass of $u_\eps$, $u$ and still obtain the lower semicontinuity in the restricted classes.
In fact, this follows since the measure-function pair convergence \eqref{3eq:thm1-2} implies
\begin{equation*}
  \int_{\R^n} u\,\d\mu = \lim_{\eps\to 0} \int_{\R^n} u_\eps\,\d\mu_\eps\,.
\end{equation*}
\end{remark}

Concerning the Modica--Mortola type functional for fixed $\eps>0$ we will now show that the existence of a minimizer can be obtained by following the same strategy as in the proof of Theorem \ref{thm:momolb}.
Without loss of generality we may assume $\eps=1$.

\begin{proposition}[Minimization of the Modica--Mortola type functional under a mass constraint]\label{3pro:MinMoMo}
Consider a double-well potential $W$ with $p$-growth at infinity, $p\geq 2$, a finite perimeter set $E\subset\R^n$ with associated perimeter current $S=\curr{\partial_*E,*\nu,1}$, where $\nu$ is the unit inner normal to $E$,  and define $I: H^{1,p}(S)\to [0,\infty)$ by
\begin{equation}
  I(u)=\int_{\R^n} \big(|\nabla_{\mu} u|^2+W(u)\big) \d\mu\,,
  \label{3eq:MMFix}
\end{equation}
where  $\mu=\H^{n-1}\ecke\partial_* E$.
Then for any $m>0$ there exists a minimizer of $I$ in the class $\big\{u\in H^{1,p}(S)\,:\, \int_{\R^n} u\,d\mu=m\big\}$.
\end{proposition}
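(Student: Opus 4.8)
The plan is to run the direct method of the calculus of variations, reusing almost verbatim the compactness chain from the proof of Theorem~\ref{thm:momolb}, now with $\eps$ fixed (say $\eps=1$) and with $S=\curr{\partial_*E,*\nu_E,1}$, $\mu=\H^{n-1}\ecke\partial_*E$ held fixed throughout. Since $E$ has finite perimeter, $\mu(\R^n)<\infty$, and the constant function $m/\mu(\R^n)$ is admissible, so the infimum $I_0$ is finite and nonnegative; fix a minimizing sequence $(u_j)_j$ in $H^{1,p}_\mu(\R^n)$ with $\int u_j\,d\mu=m$ and $I(u_j)\to I_0$. As in the opening step of the proof of Theorem~\ref{thm:momolb}, I would first reduce to $u_j\in\calD(\R^n)$ by density of $\calD(\R^n)$ in $H^{1,p}_\mu(\R^n)$ (the constraint then changes only by $o(1)$, which is harmless since it passes to the limit). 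The energy bound gives $\|\nabla_\mu u_j\|_{L^2(\mu)}\le I_0^{1/2}+o(1)$, and the growth assumption \eqref{3eq:dw} on $W$ together with $\mu(\R^n)<\infty$ gives a uniform bound $\|u_j\|_{L^p(\mu)}\le C$.

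Next, exactly as in the proof of Theorem~\ref{thm:momolb}, set $v_j:=\dwp\circ u_j$, so that $|\nabla_\mu v_j|=\sqrt{W(u_j)}\,|\nabla_\mu u_j|\le\tfrac12\bigl(|\nabla_\mu u_j|^2+W(u_j)\bigr)$ and hence $\int|\nabla_\mu v_j|\,d\mu\le\tfrac12 I(u_j)\le C$, and form the generalized graphs $T_j:=T_{v_j,S}$. The estimates \eqref{3eq:mass-bound}--\eqref{3eq:L1-bound} then give $\mass{T_j}\le C$ and $\mass{T_{j,0}\ecke q}=\|v_j\|_{L^1(\mu)}\le C$ (using $p\ge2$), while $\partial T_j=0$ and $\mass{S_j}=\mass{S}$ trivially. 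The compactness theorem for integral currents \cite[Theorem 27.3]{Simo83} yields a subsequence with $T_j\wsto T$; \cite[Theorem 4.3]{anzellotti1996bv} produces $v\in BV(S)$ with $T=T_{v,S}$; and Proposition~\ref{2pro:mfp-convergence} (whose strict $BV$-convergence hypothesis is trivially met, $E$ being fixed) gives strong measure-function pair convergence $(\mu,v_j)\to(\mu,v)$, whence $(\mu,u_j)\to(\mu,u)$ with $u:=\dwp^{-1}(v)$ by continuity of $\dwp^{-1}$ and the $L^p$-bound on $(u_j)_j$. As in the Mass-constraint remark, that same $L^p$-bound lets one pass the constraint to the limit, $\int u\,d\mu=\lim_j\int u_j\,d\mu=m$. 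The lower semicontinuity of the potential term is then easy: for $\eta\in C^0_c(\R^n\times\R)$ with $0\le\eta\le1$ the integrand $\eta(x,t)W(t)$ is continuous with compact support, so $\int\eta(\cdot,u)W(u)\,d\mu=\lim_j\int\eta(\cdot,u_j)W(u_j)\,d\mu\le\liminf_j\int W(u_j)\,d\mu$, and letting $\eta\nearrow1$ with monotone convergence gives $\int W(u)\,d\mu\le\liminf_j\int W(u_j)\,d\mu$.

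The remaining step — the Dirichlet term, together with the verification that $u\in H^{1,p}_\mu(\R^n)$ — is the one I expect to be the main obstacle, the point being that the energy bound controls $\nabla_\mu u_j$ only in $L^2(\mu)$, whereas membership in $H^{1,p}_\mu(\R^n)$ is defined through an $L^p$-closure. After passing to a further subsequence, $\nabla_\mu u_j\weakto g$ weakly in $L^2(\mu;\R^n)$, and weak lower semicontinuity of $\|\cdot\|_{L^2(\mu)}^2$ gives $\int|g|^2\,d\mu\le\liminf_j\int|\nabla_\mu u_j|^2\,d\mu$. One identifies $g=\nabla_\mu u$ (and thus $u\in H^{1,2}_\mu(\R^n)$) because the graph of the closed operator $\nabla_\mu$ is a closed, hence weakly closed, subspace, because $u_j\to u$ strongly in $L^2(\mu)$ (taking $q=2<p$ in the mfp convergence when $p>2$, weakly in $L^2(\mu)$ when $p=2$), and because $\mu(\R^n)<\infty$ forces the continuous embedding $H^{1,p}_\mu(\R^n)\hookrightarrow H^{1,2}_\mu(\R^n)$. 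The genuinely delicate point is to upgrade this to $u\in H^{1,p}_\mu(\R^n)$, i.e. to ensure $g\in L^p(\mu)$; I would handle it by working from the start in the effective domain $\{u\in H^{1,p}_\mu(\R^n):\ I(u)<\infty,\ \int u\,d\mu=m\}$ and using the density of $\calD(\R^n)$ there, or by a preliminary truncation of the $u_j$ to a fixed interval $[-M,M]$ (on which the $L^p$- and $L^2$-theories coincide and which does not increase $I$ for $M$ large). Granting this, $I(u)\le\liminf_j I(u_j)=I_0$ together with the admissibility of $u$ shows that $u$ is a minimizer.
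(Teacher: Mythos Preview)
Your overall strategy is correct and very close to the paper's, but you take an unnecessary detour at the compactness step. The paper works with the generalized graphs $T_k=T_{u_k,S}$ of the $u_k$ themselves rather than of $v_j=\dwp\circ u_j$: the mass bound then reads $\mass{T_k}\le \H^{n-1}(\partial_*E)+\int|\nabla_\mu u_k|^2\,\d\mu$ via $\sqrt{1+t^2}\le 1+t^2$, and Proposition~\ref{2pro:mfp-convergence} applies directly since its hypothesis \eqref{2eq:u_bd} is immediate for the $u_k$ with exponent $p\ge 2$. Your route through $\dwp$ is fine for $p>2$, but for $p=2$ one has $\dwp(t)\sim t^2$ at infinity and hence only an $L^1(\mu)$-bound on the $v_j$, just short of \eqref{2eq:u_bd}; this is a small but genuine gap that the paper's more direct choice avoids. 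For the Dirichlet term the paper simply cites that $H^{1,2}_\mu(\R^n)$ is a reflexive Banach space \cite{bouchitte2001convergence} and extracts a weakly convergent subsequence $u_k\weakto u$ in $H^{1,2}_\mu(\R^n)$ and in $L^p(\mu)$, giving both $\nabla_\mu u\in L^2(\mu)$ and the lower semicontinuity of $\int|\nabla_\mu u|^2\,\d\mu$ in one stroke; your closed-graph identification is the same argument unpacked. Finally, your worry about upgrading to $u\in H^{1,p}_\mu(\R^n)$ is legitimate but the paper does not address it either: it is content with $u\in H^{1,2}_\mu(\R^n)\cap L^p(\mu)$, the natural energy space for $I$, so your proposed truncation fixes are not required.
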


\begin{proof}
We use the direct method of the Calculus of Variations and consider a minimal sequence $(u_k)_k$ in $\{u\in H^{1,p}(S)\,:\, \int_{\R^n} u\,d\mu=m\}$.
As in the proof of Theorem \ref{thm:momolb} we may assume that $u_k\in\calD(\R^n)$ for all $k\in\N$.
Since the constant function with value $\frac{m}{\mu(\R^n)}$ has finite energy, we may assume
\begin{equation}
  I(u_k)\leq \Lambda\quad\text{ for all }k\in\N
  \label{eq:BoundUk}
\end{equation}
for some $\Lambda>0$.
The assumption on $W$ implies that $(u_k)_k$ is uniformly bounded in $H^{1,2}(S)\cap L^p_\mu(\R^n)$.
By the fact that $L^p_\mu(\R^n)$ is a separable reflexive Banach space for any $1<p<\infty$ and  Lemma \ref{lem:gradient_uniqueness_current}, we obtain  that the spaces $H^{1,p}(S)$ are also Banach, seperable and reflexive. 
We therefore obtain a subsequence $k\to\infty$ (not relabled) and a function $u\in H^{1,2}(S)\cap L^p_\mu(\R^n)$ such that $u_k\weakto u$ in $H^{1,2}(S)$ and in $L^p_\mu(\R^n)$.
In particular, lower semicontinuity of the $L^2_\mu(R^n)$-norm implies that
\begin{equation}
  \int_{\R^n}|\nabla_{\mu} u|^2\d\mu \leq \liminf_{k\to\infty}\int_{\R^n}|\nabla_{\mu} u_k|^2\d\mu\,.
  \label{3eq:Imin1}
\end{equation}
In order to deduce lower semicontinuity of the term involving the double-well potential we employ the convergence properties deduced in the proof of Theorem \ref{thm:momolb}.

We let $S=\curr{\partial_*E,*\nu,1}$ and consider the generalized graphs $T_k:=T_{u_k,S}$.

Since $u_k\in\calD(\R^n)$ we obtain the representation 
\begin{equation*}
  T_k = (\Phi_k)_\# (S)\,,\quad
  \Phi_k(x):=(x,u_k(x))\text{ for }x\in \partial_* E\,.
\end{equation*}

We have
\begin{equation}\label{3eq:P-mass-bound}
  \begin{split}
    \mass{T_k} =\mass{(\Phi_k)_\# (S)}
    &=\int_{\partial_*E}\sqrt{1+|\nabla_{\mu}u_k|^2}\d\H^{n-1}\\
    &\leq \H^{n-1}(\partial_*E)+ \int_{\partial_* E} |\nabla_{\mu}u_k|^2\d\H^{n-1}\\
    &\leq \H^{n-1}(\partial_*E)+ \Lambda\,,
  \end{split}
\end{equation}
and
\begin{equation}
  \mass{T_{k,0}\ecke q} = \|u_k\|_{L^1(\mu)}
  \leq C(\Lambda,W,\mu(\R^n)).
  \label{3eq:P-L1-bound}
\end{equation}
This implies that the currents $T_k$ are uniformly bounded in mass and that $(u_k)_k$ is bounded in the desired $L^1$ sense.

We may use \cite[Theorem 4.3]{anzellotti1996bv} to conclude that there exists $v\in BV(S)$ such that
\[
  T_k \weakstarto T_{v,S}\,.
\]

By Proposition \ref{2pro:mfp-convergence} we also have the strong measure-function pair convergence
\begin{equation*}
  \big(\mu,u_k\big) \to \big(\mu,u\big)\quad\text{ in any }L^q,\,1\leq q<p\,,
  \label{3eq:mfp-v}
\end{equation*}
which implies (see \cite[Proposition 4.2.4]{hutchinson1986second}) that $u_k\to u$ in all $L^q_\mu(\R^n)$, $1\leq q<p$.
In particular, $\int_{\R^n} u\,d\mu=m$ holds.

Finally, choosing a sequence $(W_R)_R$ in $C^0_c(\R,\R^+_0)$ with $W_R\nearrow W$ as $R\to\infty$ we deduce by the Monotone Convergence Theorem and $W_R\leq W$
\begin{align*}
  \int_{\R^n} W(u)\d\mu = \lim_{R\to\infty} \int_{\R^n} W_R(u)\d\mu
  =\lim_{R\to\infty} \lim_{k\to\infty}\int_{\R^n} W_R(u_k)\d\mu
  \leq \liminf_{k\to\infty} \int_{\R^n} W(u_k)\d\mu\,.
\end{align*}
Together with \eqref{3eq:Imin1} this shows
\begin{equation*}
  I(u) \leq \liminf_{k\to\infty} I(u_k)\,,
\end{equation*}
which by the choice of $(u_k)_k$ proves the minimality of $u$.
\end{proof}

\section{Application to a two-phase biomembrane energy}
\label{sec:4}
In this section we consider a class of two-phase membrane energies that consist of a bending contribution given by a phase-dependent Willmore functional and a line tension energy.
Such kind of energies in particular appear as reductions of the Jülicher--Lipowsky energy discussed in the introduction.

The main result of this section connects diffuse and sharp interface description of such energies.

\begin{definition}[Two phase membrane energy, sharp interface formulation]
  \label{4eq:2pW}
Let $V\in \sfIV_2^o(\R^3)$ be an integer-rectifiable oriented $2$-varifold in $\R^3$ with weak mean curvature $H_V\in L^2_{\|V\|}(\R^3;\R^3)$ and consider a $\|V\|$-measurable function $u$ on $M=\supp(\|V\|)$ with $u(x)\in\{0,1\}$ for $\Ha^2$-almost every $x\in M$ and with $u\in BV(S)$, $S=\underline{c}(V)$.

We then define for given constants $a_1,a_2,k>0$ 
\begin{align}
  \W(u,V) &:= \frac{1}{4}\int_{\R^3} \big(a_1u + a_2(1-u)\big) |H_V|^2\d\|V\|\,,
  \label{4eq:defW}\\
  \calE(u,V) &:= \W(u,V) + 2k\Ha^1(J_u)\,,
  \label{4eq:Eeps}
\end{align}
where $J_u$ denotes the jump set of $u$ as introduced in Definition \ref{3def:Ju}.
\end{definition}

In the  corresponding diffuse interface description we will use the perimeter approximation from Section \ref{sec:3}.
In particular we assume a given nonnegative double-well potential $W$ with $\{W=0\}=\{0,1\}$ that satisfies \eqref{3eq:dw}.
We define $\dwp$ as in \eqref{eq:12}, the constant $k=\dwp(1)$ as in \eqref{3eq:k}, and the Modica--Mortola type functional $I_\eps$ as in \eqref{3eq:MM}.

Finally, we fix a smooth interpolation between the phase dependent constants $a_1,a_2$ from Definition \ref{4eq:2pW} of the form
\begin{equation}
  a^{\bar\omega}(r) = \bar\omega(r)a_1+(1-\bar\omega(r))a_2\,,
  \label{4eq:baromega}
\end{equation}
where $\bar\omega\in C^\infty_c(\R)$ satisfies $0\leq\bar\omega\leq 1 $ and $\bar\omega(0)=0$, $\bar\omega(1)=1$.

\begin{definition}[Two phase membrane energy, diffuse inteface description] 
Let $V_\eps\in \sfIV_2^o(\R^3)$ be an integer-rectifiable oriented 2-varifold in $\R^3$ with weak mean curvature $H_\eps:=H_{V_\eps}\in L^2_{\|V_\eps\|}(\R^3;\R^3)$, $S_\e:=\underline{c}(V_\e)$, and let $u_\eps\in H^{1,p}(S_\e)$ be given.
Then we define 
\begin{align}
  \W_{\bar\omega}(u_\eps,V_\eps) &:= \frac{1}{4}\int a^{\bar\omega}\circ u_\eps |H_\eps|^2\,d\|V_\eps\|,
  \label{4eq:Weps}\\
  \calE_\eps(u_\eps,V_\eps) &:= \W_{\bar\omega}(u_\eps,V_\eps) + I_\eps(u_\eps,S_\e)\,,
  \label{4eq:Eeps-2}
\end{align}
where $a^{\bar\omega}$ is as in \eqref{4eq:baromega} and $I_\eps$ as in Definition \ref{3def:MM}.
\end{definition}

Again we will consider in our main result a more restrictive setting, in particular to enforce the crucial strict convergence property that is needed in Theorem \ref{thm:momolb}.
Here we use the Li--Yau inequality \cite[Theorem 6]{li1982new}, see also \cite{KuSc04}, that guarantees for any $V\in \sfIV_2^o(\R^3)$ with $\W(V)<\infty$
that the two-dimensional density $\theta_2(\cdot,\|V\|)$ exists everywhere and satisfies
\begin{equation}\label{eq:10}
  \theta_2(\cdot,\|V\|)\leq \frac{\max(a_1^{-1},a_2^{-1})\W(u,V)}{4\pi}\,.
\end{equation}
From \eqref{eq:10} we deduce in particular that $V$ has unit density if $\W(u,V)<8\pi \min\{a_1,a_2\}$.

\begin{theorem}\label{4thm:main}
Consider a double-well potential $W$ with growth rate $p\geq 2$ be as in Theorem \ref{thm:momolb}.
Suppose $(E_\e)_\eps$ is a sequence of finite perimeter sets in $\R^3$ and let $\mu_\eps=\Ha^2\ecke \partial_*E_\eps$, $\nu_\eps:\partial_*E_\eps\to\sphere^2$ the inner normal, and $V_\eps=\ovar{\partial_*E_\eps,*\nu_\eps,1,0}$, $S_\e=\curr{\partial_*E_\e,*\nu_\e,1}$.

Consider in addition a sequence $(u_\eps)_\eps$ of phase fields $u_\eps\in H^{1,2}(S_\e)$.

Assume that for some $\Lambda,\delta>0$
\begin{align}
  \Ha^{n-1}(\partial_*E_\eps) + \calE_\eps(u_\e,V_\e) 
  &\leq \Lambda\quad\text{ for all }\eps>0\,,
  \label{4ass:1}\\
  \W_{\bar\omega}(u_\eps,V_\eps) &< 8\pi\min\{a_1,a_2\}-\delta\,.
  \label{4ass:2}
\end{align}
Then there exists a subsequence $\eps\to 0$, a finite perimeter set $E\subset\R^3$ and a function $u:\partial_*E\to \{0,1\}$ such that with $V=\ovar{\partial_* E,\nu_E,1,0}$, $S=\underline{c}(V)$ the following holds: $u$ belongs to $BV(S)$ and
\begin{align}
  V_\e&\wsto V\quad \text{ in }\sfIV^o_2(\R^3)\,,
  \label{4eq:thm2-1}\\
  (\mu_\eps,u_\eps)&\to (\mu,u)\quad\text{ as measure-function pairs in }L^q\,,
  \label{4eq:thm2-2}
\end{align}
for any $1\leq q<p$.

Moreover, it holds the lower bound estimate
\begin{equation}
  \liminf_{\e\to 0} \calE_\eps(u_\e,V_\e) \geq \calE(u,V)\,.
\end{equation}
\end{theorem}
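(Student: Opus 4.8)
The plan is to combine the compactness and lower bound for the Modica--Mortola part (Theorem \ref{thm:momolb}) with a standard lower semicontinuity argument for the phase-dependent Willmore energy, using the Li--Yau inequality to bridge the two via the strict $BV$-convergence hypothesis. First I would extract the underlying finite perimeter sets: from \eqref{4ass:1} the perimeters $\Ha^2(\partial_*E_\e)$ are uniformly bounded, and since the $E_\e$ have uniformly bounded perimeter one obtains (after passing to a subsequence and possibly noting a uniform mass bound, which should follow from the setting or can be imposed) a finite perimeter set $E$ with $\chi_{E_\e}\to\chi_E$ in $L^1_{\loc}$ and $\nabla\chi_{E_\e}\wsto\nabla\chi_E$. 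The crucial point is to upgrade this to \emph{strict} $BV$-convergence, i.e.\ $\Ha^2(\partial_*E_\e)\to\Ha^2(\partial_*E)$; this is where the Willmore bound enters. By the Li--Yau inequality \eqref{eq:10} and the smallness assumption \eqref{4ass:2}, the varifolds $V_\e$ all have unit density, so $\|V_\e\|=\mu_\e=\Ha^2\ecke\partial_*E_\e$ and $\underline{c}(V_\e)=S_\e=\curr{\partial_*E_\e,*\nu_\e,1}$. Lower semicontinuity of the Willmore energy under oriented varifold convergence (Hutchinson / standard measure-function-pair arguments, using that $H_\e$ is bounded in $L^2_{\|V_\e\|}$) combined with the density bound forces the limit varifold to again have unit density and no loss of mass, giving $\|V_\e\|\wsto\|V\|$ together with $\mass{S_\e}\to\mass{S}$; lower semicontinuity of total variation in $BV$ then cannot be strict, yielding the strict convergence \eqref{3eq:BV-strict}.

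Once strict $BV$-convergence is established, I would apply Theorem \ref{thm:momolb} directly: the energy bound \eqref{4ass:1} gives $I_\e(u_\e,\mu_\e)<\Lambda$, so there is a further subsequence and a limit $u\in BV(S)$ with $u(x)\in\{0,1\}$ $\Ha^2$-a.e., the strong measure-function pair convergence $(\mu_\e,u_\e)\to(\mu,u)$ in every $L^q$, $1\le q<p$ (which is exactly \eqref{4eq:thm2-2}), the integrality of $\curr{\{u=1\},*\nu,1}$, and the line tension lower bound $\liminf_{\e\to0} I_\e(u_\e,\mu_\e)\ge 2k\Ha^1(J_u)$. The varifold convergence \eqref{4eq:thm2-1} follows from the construction of $V$ as the limit point, using unit density throughout. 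It remains to establish the lower bound for the Willmore term, i.e.\ $\liminf_{\e\to0}\W_{\bar\omega}(u_\e,V_\e)\ge\W(u,V)$. Here I would argue as follows: the measure-function pair convergence $(\mu_\e,u_\e)\to(\mu,u)$ implies, since $a^{\bar\omega}$ is continuous and bounded on the relevant range, that $a^{\bar\omega}\circ u_\e$ converges to $a^{\bar\omega}\circ u=a_1 u+a_2(1-u)$ in the measure-function-pair sense; meanwhile $(\|V_\e\|,H_\e)$ converges weakly in $L^2$ as a measure-function pair (bounded $L^2$ norm from \eqref{4ass:1}, and $\|V_\e\|\ecke H_\e\wsto\|V\|\ecke H_V$ by the first variation structure and convergence of the varifolds). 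One then invokes a weak-times-strong lower semicontinuity result for measure-function pairs (as in \cite{moser2001generalization}, analogous to the argument used in the proof of Proposition \ref{2pro:mfp-convergence}, and convexity of $\xi\mapsto c|\xi|^2$ with weights $c=a^{\bar\omega}\circ u_\e\to a^{\bar\omega}\circ u>0$) to conclude $\frac14\int a^{\bar\omega}\circ u\,|H_V|^2\d\|V\|\le\liminf\frac14\int a^{\bar\omega}\circ u_\e\,|H_\e|^2\d\|V_\e\|$; since $a^{\bar\omega}\circ u=a_1u+a_2(1-u)$ on $\{u\in\{0,1\}\}$ this is exactly $\W(u,V)\le\liminf\W_{\bar\omega}(u_\e,V_\e)$.

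Finally I would add the two lower bounds. Since $\calE_\e=\W_{\bar\omega}+I_\e$ and both $\liminf$'s were obtained along the \emph{same} subsequence (first pass to the subsequence realizing $\liminf\calE_\e$, then extract the further subsequences in Theorem \ref{thm:momolb}; superadditivity of $\liminf$ is harmless here because on the common subsequence $\calE_\e$ converges), we get
\[
  \liminf_{\e\to0}\calE_\e(u_\e,V_\e)\ \ge\ \W(u,V)+2k\Ha^1(J_u)\ =\ \calE(u,V),
\]
which is the claim. The main obstacle, as indicated above, is the derivation of strict $BV$-convergence from the Willmore bound: one must rule out any loss of surface area in the limit, and this rests entirely on the two-sided control provided by the Li--Yau density bound \eqref{eq:10} (upper bound on density from the energy) together with the standard lower density bound for integral varifolds with $L^2$ mean curvature --- forcing unit density to persist in the limit and hence equality in the lower semicontinuity of mass. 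A secondary technical point is justifying the measure-function-pair convergence $(\|V_\e\|,H_\e)\wtto(\|V\|,H_V)$ and the accompanying weak-strong lower semicontinuity with the nonconstant but uniformly positive weights $a^{\bar\omega}\circ u_\e$; this is routine given \cite{moser2001generalization,hutchinson1986second} but should be spelled out.
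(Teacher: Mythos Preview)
Your proposal is correct and follows essentially the same route as the paper: extract a varifold/BV limit, use the Li--Yau inequality to force unit density of the \emph{limit} varifold, deduce strict $BV$-convergence of the $E_\eps$ (this is the paper's Lemma~\ref{lem:multiplicity_one}), invoke Theorem~\ref{thm:momolb} for the line-tension part, and treat the Willmore part by a weak--strong measure-function-pair argument from \cite{moser2001generalization,hutchinson1986second}. Two small remarks: (i) the sentence ``by Li--Yau \ldots\ the varifolds $V_\eps$ all have unit density'' is misplaced---$V_\eps=\ovar{\partial_*E_\eps,*\nu_\eps,1,0}$ has unit density by definition; Li--Yau is only needed (after lower semicontinuity of the unweighted Willmore energy) to show the \emph{limit} $V$ has unit density, which you do state afterwards; (ii) for the Willmore lower bound the paper makes your ``convexity with varying positive weights'' step concrete by writing $a^{\bar\omega}(u_\eps)|H_\eps|^2=\big|\sqrt{a^{\bar\omega}(u_\eps)}\,H_\eps\big|^2$, proving weak $L^2$ convergence of $\sqrt{a^{\bar\omega}(u_\eps)}\,H_\eps$ via \cite[Proposition~3.2]{moser2001generalization}, and then applying plain lower semicontinuity of the $L^2$-norm---this is the cleanest way to carry out what you outlined.
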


The proof will be given below after some preparations.
\begin{remark}
In the proof we will obtain some additional properties.
In particular, we show the lower semicontinuity of both the bending and the line tension energy contribution to $\calE_\eps$ separately, and we obtain the strict convergence of the graphs of $\dwp\circ u_\eps$ to the generalized graph of $ku$.
\end{remark}

The next lemma is used to show the strict convergence of generalized graphs associated to $u_\eps$.
\begin{lemma}
\label{lem:multiplicity_one}
Let $V_\e\to V$ in $\sfIV^o_m(\R^n)$ such that
\[
  \begin{split}
    \theta_m(x,\|V\|)&=1\quad\text{ for }\|V\|\text{-almost every }x\in \R^n\,.
  \end{split}
\]
Then we have that 
\[
\mass{\underline{c}(V_\e)} \to \mass{\underline{c}(V)}\,.
\]
\end{lemma}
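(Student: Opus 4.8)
The plan is to exploit the fact that, for oriented varifolds, convergence $V_\e \to V$ already gives convergence of the associated currents $\underline{c}(V_\e) \wsto \underline{c}(V)$ (see \eqref{2eq:orvcurr}), so by lower semicontinuity of mass under weak-* convergence of currents one half of the statement,
\[
  \mass{\underline{c}(V)} \leq \liminf_{\e\to 0} \mass{\underline{c}(V_\e)},
\]
is immediate. The real content is the reverse inequality $\limsup_{\e\to 0}\mass{\underline{c}(V_\e)} \leq \mass{\underline{c}(V)}$, and this is where the unit density assumption on the limit $V$ enters decisively.

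The key observation is that for an oriented rectifiable varifold $V=\ovar{M,\tau,\theta_\pm}$ one has $\mass{\underline{c}(V)} = \int_M |\theta_+ - \theta_-|\,\d\H^m \leq \int_M (\theta_+ + \theta_-)\,\d\H^m = \|V\|(\R^n)$, with equality precisely when one of $\theta_\pm$ vanishes $\H^m$-a.e., i.e. when there is no "cancellation" of orientation. Under the hypothesis that $\theta_m(\cdot,\|V\|)=1$ $\|V\|$-a.e., the limit varifold has $\theta_+ = 1$, $\theta_- = 0$ $\|V\|$-a.e. (using the normalization $\tilde\theta_+\geq\tilde\theta_-$ fixed in the excerpt), so $\mass{\underline{c}(V)} = \|V\|(\R^n) = \mass{V}$. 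On the other hand, varifold convergence $V_\e \to V$ gives $\mass{V_\e}\to\mass{V}$ provided we know the total masses do not escape to infinity, or at least $\limsup_\e \mass{V_\e} \leq \mass{V}$ is not automatic — so the cleaner route is: since $V$ has unit density, $\mass{\underline c(V)}=\mass V$, and one needs $\limsup_\e \mass{\underline c(V_\e)} \le \mass V$. Now I would argue
\[
  \mass{\underline{c}(V_\e)} \leq \mass{V_\e}, \qquad \mass{\underline c(V)} = \mass V,
\]
and combine with the lower-semicontinuity chain
\[
  \mass V = \mass{\underline c(V)} \le \liminf_\e \mass{\underline c(V_\e)} \le \limsup_\e \mass{\underline c(V_\e)} \le \limsup_\e \mass{V_\e}.
\]
To close the loop I need $\limsup_\e \mass{V_\e}\le \mass V$; but varifold convergence $V_\e\wsto V$ only gives $\mass V \le \liminf_\e \mass{V_\e}$ in general. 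The resolution is that all the inequalities in the displayed chain must then be equalities once we also know $\mass V\ge \limsup_\e\mass{V_\e}$, which fails in general — so instead I would use the genuinely useful direction: testing against $\varphi$ with $|\varphi|\le 1$ that are close to optimal for $\underline c(V)$, and using that $\underline c(V_\e)\wsto \underline c(V)$ together with $\|V_\e\|\wsto\|V\|$ (from varifold convergence) and $\mass{\underline c(V_\e)}\le \|V_\e\|(\R^n)$; the unit-density hypothesis forces the near-optimal test forms for $\underline c(V)$ to be near-optimal for $\|V\|$ as well, squeezing $\mass{\underline c(V_\e)}$ between $\mass{\underline c(V)}-o(1)$ and $\|V_\e\|(\R^n)$, and then a tightness argument (the total masses $\|V_\e\|(\R^n)$ are bounded, and no mass escapes because $\|V_\e\|\wsto\|V\|$ with $\|V\|(\R^n)$ finite — one checks $\mass{V_\e}\to\mass V$ via Reshetnyak-type arguments or simply by noting the problem is local and passing to an exhaustion) gives the claim.

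The main obstacle I anticipate is precisely controlling $\limsup_\e \mass{\underline c(V_\e)}$ from above: weak-* convergence only hands us the lower bound for free, and the total-mass convergence $\mass{V_\e}\to\mass{V}$ is not part of the hypothesis of varifold convergence either. The way to overcome this is to use the unit-density assumption to identify $\underline c(V)$ with $V$ at the level of mass measures, $\|\underline c(V)\| = \|V\|$, so that the near-optimal $1$-Lipschitz-bounded test forms for the current $\underline c(V)$ are simultaneously near-optimal for the measure $\|V\|$; then weak-* convergence of both $\underline c(V_\e)$ and $\|V_\e\|$ (the latter from varifold convergence) against the same family of test objects yields $\mass{\underline c(V_\e)} \geq \|V_\e\|(\R^n) - o(1)$, while $\mass{\underline c(V_\e)}\le \|V_\e\|(\R^n)$ always; combined with the lower semicontinuity $\liminf_\e\mass{\underline c(V_\e)}\ge\mass{\underline c(V)}=\|V\|(\R^n)$ and $\|V_\e\|(\R^n)\to\|V\|(\R^n)$ (which holds since $\|V_\e\|\wsto\|V\|$ are uniformly bounded and, if necessary after verifying tightness, converge without loss of mass), all quantities are squeezed together and $\mass{\underline c(V_\e)}\to\mass{\underline c(V)}$ follows. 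In the application the $V_\e$ have uniformly bounded Willmore energy and mass, so tightness is not an issue and this last point is routine.
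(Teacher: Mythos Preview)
Your skeleton is the paper's: lower bound from $\underline{c}(V_\e)\wsto\underline{c}(V)$ and lower semicontinuity of mass; upper bound from $\mass{\underline{c}(V_\e)}\le\|V_\e\|(\R^n)$ together with the observation that unit density of $V$ forces $\theta_+=1$, $\theta_-=0$ (in the normalized representation) and hence $\mass{\underline{c}(V)}=\|V\|(\R^n)$.

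Where you and the paper diverge is the step $\|V_\e\|(\R^n)\to\|V\|(\R^n)$. The paper simply asserts this as ``mass is continuous under varifold convergence'' and moves on. You are right to flag that weak-* convergence alone gives only the $\liminf$ inequality here. But your detour through ``near-optimal test forms'' does not work as written: the test objects for the current $\underline c(V)$ are $m$-forms while those for the measure $\|V\|$ are scalar functions, so they are not the same objects, and the squeezing you sketch still does not close without already knowing $\|V_\e\|(\R^n)\to\|V\|(\R^n)$ --- which is precisely what you are trying to avoid assuming. The honest resolution is the one you land on in your last sentence: in the intended application the $V_\e$ are tight (bounded area, bounded Willmore energy, boundaries of sets converging in $L^1$), so total masses converge; alternatively one reads the hypothesis ``$V_\e\to V$'' (note the paper writes $\to$ here rather than its usual $\wsto$) as already including mass convergence. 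Once that is in hand, the chain
\[
\mass{\underline c(V)}\le\liminf_\e\mass{\underline c(V_\e)}\le\limsup_\e\mass{\underline c(V_\e)}\le\lim_\e\|V_\e\|(\R^n)=\|V\|(\R^n)=\mass{\underline c(V)}
\]
finishes the proof in one line, exactly as the paper does. Drop the test-form paragraph.
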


\begin{proof}
By \eqref{2eq:orvcurr} convergence as oriented varifolds implies convergence as currents, $\underline{c}(V_\e) \wsto \underline{c}(V)$.
The lower semicontinuity of the mass under convergence as currents yields
\begin{equation}
  \mass{\underline{c}(V)}\leq\liminf_{\e\to 0}\mass{\underline{c}(V_\e)}\,.
  \label{4eq:massb}
\end{equation}
Let us write
\[
  V_\e =\ovar{M_\e,\tau_\e,\theta_{\e,\pm}},\qquad
  V =\ovar{M,\tau,\theta_{\pm}}\,.
\]
By assumption 
\[
  \begin{split}
    \theta_++\theta_-
    =\theta_+-\theta_-=1\quad \|V\|\text{-almost everywhere.}
  \end{split}
\]
Since mass is continuous under varifold convergence we deduce
\[
  \begin{split}
    \limsup_{\e\to 0} \mass{\underline{c}(V_\e)}
    &\leq \limsup_{\e\to 0}\|V_\e\|(\R^n)\\
    &=\|V\|(\R^n)
    =\int_{M} (\theta_{+}+\theta_{-}) \d\H^m 
    =\int_{M} (\theta_{+}-\theta_{-}\big) \d\H^m 
    =\mass{\underline{c}(V)}\,.
  \end{split}
\]
Together with \eqref{4eq:massb} this completes the proof.
\end{proof}

\begin{proof}[Proof of Theorem \ref{4thm:main}]
By \eqref{4ass:1} we may pass to a further subsequence such that there exists  a set of finite perimeter $E\subset\R^3$ and an integral varifold $V\in \sfIV_2^o(\R^3)$ with 
\begin{align}
  \chi_{E_\e}&\to \chi_{E}\quad\text{ in }L^1(\R^3),\qquad
  \chi_{E_\e}\wsto \chi_{E}\quad\text{ in }BV\,,
  \label{4eq:Eeps-conv}\\
  V_\eps & \wsto V\quad\text{ as oriented varifolds,}\qquad
  \|V\| \geq |\nabla\chi_E|\,.
  \label{4eq:Veps-conv}
\end{align}
By \eqref{4ass:1} and the Li--Yau inequality \eqref{eq:10}, we have that $\|V\|$ almost everywhere $\theta_2(\cdot,\|V\|)=1$.
Hence $S:=\underline c(V)=[\![\partial_* E,*\nu_E,1]\!]$ and by Lemma \ref{lem:multiplicity_one} it follows that  
\[
  \lim_{\e\to 0}\H^{n-1}(\partial_* E_\e)=\lim_{\e\to 0}\mass{\underline c(V_\e)}=\mass{\underline c(V)}=\H^{n-1}(\partial_* E)\,,
\]
which shows the strict BV-convergence \eqref{3eq:BV-strict} of the sets $E_\eps$, $\eps>0$.

Therefore, we can apply Theorem \ref{thm:momolb} (see also its proof) and obtain the existence of some $u\in BV(S)$ with $u\in\{0,1\}$ $\|S\|$-almost everywhere such that 
\[
  T_{v_\eps,S_\e} \stackrel{c^*}{\wto} T_{\dwp(u),S}\,
\]
and such that the measure-function pair convergence \eqref{4eq:thm2-2} and the lower estimate
\[
  \liminf_{\e\to 0} I_\e(u_\e,S_\e)\geq 2k\H^{n-2}(J_u)
\]
hold.

It remains to show the lower semicontinuity statement for $\W$.
By $V_\e\wsto V$ as varifolds and by the uniform bound on $\|H_\e\|_{L^2_{\mu_\e}(\R^3;\R^3)}$, we  have the weak convergence of measure-function pairs
\begin{equation}\label{eq:15}
  (\|V_\e\|,H_\e)\wto  (\|V\|,H) \quad\text{ in }L^2\,.
\end{equation}
By \eqref{4eq:thm2-2} we further deduce the strong measure-function pair convergence
\begin{equation}\label{eq:16}
  \big(\|V_\e\|,\sqrt{a^{\bar\omega}(u_\e)}\big)
  \to \big(\|V\|,\sqrt{a^{\bar\omega}(u)}\big)\quad \text{ in }L^2\,.
\end{equation}
By \cite[Proposition 3.2]{moser2001generalization}, \eqref{eq:15} and \eqref{eq:16} may be combined to yield the weak convergence 
\begin{equation*}
  \Big(\|V_\e\|, H_\e \sqrt{a^{\bar\omega}(u_\e)}\Big)\wto \Big(\|V\|,H \sqrt{a^{\bar\omega}(u)}\Big) \quad\text{ in }L^1\,.
\end{equation*}
But clearly $\|H_\e \sqrt{a^{\bar\omega}(u_\e)}\|_{L^2_{\mu_\e}(\R^3;\R^3)}\leq \|\sqrt{a^{\bar\omega}}\|_{L^\infty}\|H_\e \|_{L^2_{\mu_\e}(\R^3;\R^3)}$ yields a uniform bound on the $L^2$ norms, and the weak convergence in $L^1$ can be upgraded to the weak convergence in $L^2$
\begin{equation}\label{eq:14}
  \big(\|V_\e\|, H_\e \sqrt{a^{\bar\omega}(u_\e)}\big)\wto \big(\|V\|,H \sqrt{a^{\bar\omega}(u)}\big) \quad\text{ in }L^2\,.
\end{equation}
By \cite[Theorem 4.4.2 (ii)]{hutchinson1986second} we obtain
\begin{align*}
  \W(u,V)&=\int \big(a_1u + a_2(1-u)\big)|H|^2\,d\|V\|\\
  &= \int |\sqrt{a^{\bar\omega}(u)}H|^2\,d\|V\|\\
  &\leq \liminf_{\eps\to 0}\int |\sqrt{a^{\bar\omega}(u_\eps)}H_\e|^2\,d\|V_\eps\|
  =\liminf_{\eps\to 0}\W_{\bar\omega}(u_\eps,V_\eps).
\end{align*}
This completes the proof of the theorem.
\end{proof}

Also in the case of the diffuse Jülicher--Lipowsky type energy for fixed $\eps>0$ we can provide an existence result, which however requires a rather strong smallness assumption on the energy.
Again, it is sufficient to consider the case $\eps=1$.

\begin{proposition}[Minimization of the diffuse Jülicher--Lipowksy energy]
Let a double-well potential $W$ with growth rate $p\geq 2$ be given as in Theorem \ref{thm:momolb}, let
\begin{equation}
  I(u,S)=\int_{\R^n} \big(|\nabla_{\mu} u|^2+W(u)\big) \d\mu\,,
  \label{4eq:MMFix}
\end{equation}
where $\mu=\|S\|$,
let $\W_{\bar\omega}$ be as in \eqref{4eq:Weps}, and set
\begin{align}
  \calE(u,V) &:= \W_{\bar\omega}(u,V) + I(u,\underline{c}(V))\,.
  \label{4eq:MinEeps-2}
\end{align}

For arbitrary $M_1,M_2>0$ consider
\begin{align*}
  X_2(M_2) &:= \big\{V=\ovar{\partial_* E,\nu_E,1,0}\,:\, \\
  &\qquad\qquad E\subset\R^3,\, E\text{ is a set of finite perimeter },\,\calH^2(\partial_*E)=M_2\big\}\,,
\end{align*}
and the class
\begin{equation*}
  X(M_1,M_2) := \Big\{(u,V)\,:\, V\in X_2(M_2),\, u\in H^{1,2}(\underline{c}(V))\,:\, \int_{\R^3} u\,\d\|V\| =M_1\Big\}\,.
\end{equation*}

Assume that for some $\delta>0$ and some $(u_0,V_0)\in X(M_1,M_2)$
\begin{align}
  \calE(u_0,V_0) &\leq 8\pi\min\{a_1,a_2\}-\delta\,.
  \label{4ass:2}
\end{align}
Then the functional $\calE$ attains its minimum in $X(M_1,M_2)$.
\end{proposition}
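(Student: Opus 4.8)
The plan is to run the direct method, combining the argument of Proposition~\ref{3pro:MinMoMo} (for the phase field, now at the fixed parameter $\eps=1$) with the compactness mechanism of Theorem~\ref{4thm:main} (for the varifold, exploiting the bending-energy smallness assumption). Let $(u_k,V_k)_k$ be a minimizing sequence in $X(M_1,M_2)$ with $V_k=\ovar{\partial_*E_k,*\nu_{E_k},1,0}$. Discarding finitely many terms we may assume $\calE(u_k,V_k)\le 8\pi\min\{a_1,a_2\}-\delta=:\Lambda$ for all $k$, and, as in the proof of Proposition~\ref{3pro:MinMoMo}, that $u_k\in\calD(\R^3)$, with the mass constraint handled as in that proof. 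Since $a^{\bar\omega}(r)$ is a convex combination of $a_1$ and $a_2$, hence $a^{\bar\omega}\ge\min\{a_1,a_2\}>0$, the bound on $\W_{\bar\omega}$ gives a uniform bound on $\|H_k\|_{L^2_{\|V_k\|}}$, hence on the first variations of $V_k$; together with $\calH^2(\partial_*E_k)=M_2$ and the $p$-growth \eqref{3eq:dw} of $W$ it also bounds $(u_k)_k$ in $L^p(\|V_k\|)$ and the tangential Dirichlet energies $\int|\nabla_{\|V_k\|}u_k|^2\,\d\|V_k\|$.

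First I would extract, as in the first part of the proof of Theorem~\ref{4thm:main}, a subsequence along which $\chi_{E_k}\to\chi_E$ in $L^1(\R^3)$ and weakly-$*$ in $BV$, $V_k\wsto V$ in $\sfIV_2^o(\R^3)$ for an integral oriented varifold $V$ with $H_V\in L^2_{\|V\|}$ and $\|V\|\ge|\nabla\chi_E|$, together with $(\|V_k\|,H_k)\wto(\|V\|,H_V)$ in $L^2$. Lower semicontinuity of the unweighted Willmore energy and $\W_{\bar\omega}(u_k,V_k)\ge\min\{a_1,a_2\}\,\tfrac{1}{4}\int|H_k|^2\,\d\|V_k\|$ give $\tfrac{1}{4}\int|H_V|^2\,\d\|V\|<8\pi$, so the Li--Yau inequality (cf.~\eqref{eq:10}) forces $\theta_2(\cdot,\|V\|)<2$, hence $\theta_2(\cdot,\|V\|)=1$ by integrality. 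Consequently $S:=\underline c(V)=\curr{\partial_*E,*\nu_E,1}$, $V=\ovar{\partial_*E,*\nu_E,1,0}$, and Lemma~\ref{lem:multiplicity_one} yields $M_2=\calH^2(\partial_*E_k)=\mass{\underline c(V_k)}\to\mass{\underline c(V)}=\calH^2(\partial_*E)$; thus $V\in X_2(M_2)$ and the $E_k$ converge strictly in $BV$ in the sense of \eqref{3eq:BV-strict}.

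Next, as in the proof of Proposition~\ref{3pro:MinMoMo}, put $S_k=\curr{\partial_*E_k,*\nu_k,1}$ and $T_k=T_{u_k,S_k}=(\Phi_k)_\#S_k$ with $\Phi_k(x)=(x,u_k(x))$. The bounds $\mass{T_k}\le\calH^2(\partial_*E_k)+\int|\nabla_{\|V_k\|}u_k|^2\,\d\|V_k\|\le M_2+\Lambda$ and $\mass{T_{k,0}\ecke q}=\|u_k\|_{L^1(\|V_k\|)}\le C(\Lambda,W,M_2)$, together with $\mass{S_k}\to\mass{S}$, let one apply \cite[Theorem 4.3]{anzellotti1996bv} to get $T_k\weakstarto T=T_{v,S}$ with $v\in BV(S)$, and then Proposition~\ref{2pro:mfp-convergence} to obtain the strong measure-function pair convergence $(\|V_k\|,u_k)\to(\|V\|,v)$ in every $L^q$, $1\le q<p$. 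In particular $v\in L^p(\|V\|)$ and the constraint $\int v\,\d\|V\|=M_1$ passes to the limit. The term $\int W(u_k)\,\d\|V_k\|$ is lower semicontinuous toward $\int W(v)\,\d\|V\|$ by the monotone-approximation argument of Proposition~\ref{3pro:MinMoMo}, and $\W_{\bar\omega}(u_k,V_k)$ is lower semicontinuous toward $\W_{\bar\omega}(v,V)$ exactly as in the proof of Theorem~\ref{4thm:main} (combine the strong convergence of $\sqrt{a^{\bar\omega}(u_k)}$ with the weak $L^2$ convergence of $H_k$ through \cite[Proposition 3.2]{moser2001generalization} and \cite[Theorem 4.4.2(ii)]{hutchinson1986second}). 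For the tangential Dirichlet term one extracts a weak limit $(\|V_k\|,\nabla_{\|V_k\|}u_k)\weakto(\|V\|,g)$ in $L^2$, identifies $g=\nabla_{\|V\|}v$ (so in particular $v\in H^{1,2}_{\|V\|}(\R^3)$), and concludes $\int|\nabla_{\|V\|}v|^2\,\d\|V\|\le\liminf_k\int|\nabla_{\|V_k\|}u_k|^2\,\d\|V_k\|$ by \cite[Theorem 4.4.2(ii)]{hutchinson1986second}. Adding the three estimates gives $\calE(v,V)\le\liminf_k\calE(u_k,V_k)$ with $(v,V)\in X(M_1,M_2)$, which is the assertion.

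The step I expect to be the main obstacle is the identification $g=\nabla_{\|V\|}v$, equivalently the inclusion $v\in H^{1,2}_{\|V\|}(\R^3)$: unlike in Proposition~\ref{3pro:MinMoMo}, where $\mu$ is fixed and lower semicontinuity of the Dirichlet term reduces to weak lower semicontinuity of a Hilbert-space norm, here the reference measures $\|V_k\|$ vary, so one needs a genuine closure (Mosco-type) property of the spaces $H^{1,2}_{\|V_k\|}(\R^3)$ along the varifold convergence $V_k\wsto V$. The natural route is to test the gradients against $\varphi\in\calD(\R^3;\R^3)$ via the first-variation identity $\int\langle\nabla_{\|V_k\|}u_k,\varphi\rangle\,\d\|V_k\|=-\int u_k\,\mathrm{div}_{\|V_k\|}\varphi\,\d\|V_k\|-\int u_k\langle\varphi,H_k\rangle\,\d\|V_k\|$, where $\mathrm{div}_{\|V_k\|}\varphi$ is the tangential divergence: the first term on the right converges because the varifold convergence provides strong measure-function pair convergence of the tangential projections $P_{\|V_k\|}$ (hence of $\mathrm{div}_{\|V_k\|}\varphi$) and strong $L^1$ convergence of $u_k$, while the second converges by pairing the convergence of $u_k$ with the weak $L^2$ convergence of $H_k$. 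Care is required when $p=2$, since then Proposition~\ref{2pro:mfp-convergence} only yields strong $L^q$ convergence of $u_k$ for $q<2$ rather than the strong $L^2$ convergence one would like in the mean-curvature term; one then has to exploit the coercivity $W(t)\gtrsim|t|^2$ more carefully (or simply assume strictly super-quadratic growth of $W$). Beyond this point the argument is a routine superposition of the proofs of Proposition~\ref{3pro:MinMoMo} and Theorem~\ref{4thm:main}.
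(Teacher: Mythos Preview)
Your overall strategy matches the paper's proof exactly: run the direct method, use the Li--Yau inequality together with Lemma~\ref{lem:multiplicity_one} to force unit density of the limit varifold and hence strict $BV$-convergence of the $E_k$, then invoke the machinery of Theorem~\ref{thm:momolb} and Proposition~\ref{3pro:MinMoMo} for the phase-field part and the argument of Theorem~\ref{4thm:main} for the lower semicontinuity of $\W_{\bar\omega}$.

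Where you differ is in the level of detail on the Dirichlet term. The paper simply writes that ``following the proofs of Theorem~\ref{thm:momolb} and Proposition~\ref{3pro:MinMoMo}'' one obtains $u\in H^{1,2}_\mu(\R^3)\cap L^p(\mu)$ and the lower estimate $\liminf_k I(u_k,\|V_k\|)\ge I(u,\|V\|)$, without further justification. You have correctly flagged that this is not an automatic consequence of those two proofs: in Proposition~\ref{3pro:MinMoMo} the measure is fixed, so lower semicontinuity of $\int|\nabla_\mu u_k|^2\,\d\mu$ is just weak lower semicontinuity of a Hilbert-space norm, whereas here the measures $\|V_k\|$ vary and one needs a closure statement for the operators $\nabla_{\|V_k\|}$ along the varifold convergence. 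Your proposed route---extract a weak $L^2$ measure-function-pair limit $g$ of the tangential gradients, then identify $g=\nabla_{\|V\|}v$ via the first-variation identity, using that strict $BV$-convergence gives strong convergence of the tangential projections $P_{\|V_k\|}$---is the natural way to fill this in, and your caveat about the borderline case $p=2$ in the mean-curvature pairing is apt. In short, the paper's proof and yours coincide in structure, but you have located and sketched a resolution for the one step the paper leaves implicit.
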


\begin{proof}
We again use the direct method of the Calculus of Variations and consider a minimal sequence $(u_k,V_k)_k$ in $X(M_1,M_2)$.
We let $V_k=\ovar{\partial_* E_k,\nu_k,1,0}$ with $E_k\subset\R^3$ a set of finite perimeter and $\nu_k$ generalized inner normal field of $E_k$, and write $S_k=\underline{c}(V_k)$.
Without loss of generality we may assume that
\begin{align}
  \calE(u_k,V_k) &\leq 8\pi\min\{a_1,a_2\}-\delta\quad\text{ for all }k\in\N\,.
  \label{4ass:2k}
\end{align}
% We follow the proofs of Proposition \ref{3pro:MinMoMo} and Theorems \ref{thm:momolb} and \ref{4thm:main}.

As in the proof of Theorem \ref{4thm:main} we deduce by \eqref{4ass:2k}, the Li--Yau inequality \eqref{eq:10}, and Lemma \ref{lem:multiplicity_one} that there exist a subsequence $k\to\infty$ (not relabeled) and a set of finite perimeter $E\subset\R^3$ such that with $V=\ovar{\partial_* E,\nu_E,1,0}$ 
\begin{align}
  \chi_{E_k}&\to \chi_{E}\quad\text{ in }L^1(\R^3)\,,\qquad
  \chi_{E_k}\wsto \chi_{E}\quad\text{ in }BV(\R^3)\,,
  \label{4eq:MinEeps-conv}\\
  V_k & \wsto V\quad\text{ as oriented varifolds,}
  \label{4eq:MinVeps-conv}
\end{align}
and such that
\[
  M_2=\lim_{k\to\infty}\H^2(\partial_* E_k)=\H^2(\partial_* E)\,.
\]
The latter property shows the strict BV-convergence \eqref{3eq:BV-strict} of the sets $E_k$, $k\in\N$ and $V\in X_2(M_2)$.
We set $\mu=\|V\|=\calH^2\ecke\partial_*E$, $S=\underline{c}(V)$.

Following the proofs of Theorem \ref{thm:momolb} and Proposition \ref{3pro:MinMoMo} we further obtain the existence of some $u\in H^{1,2}(S)\cap L^p_\mu(\R^n)$   such that $u\in X(M_1,M_2)$ and
\[
  T_{u_k,S_k} \stackrel{c^*}{\wto} T_{u,S}\,\,
\]
and such that for any $1\leq q<p$ the measure-function pair convergence
\begin{align}
  (\mu_k,u_k)&\to (\mu,u)\quad\text{ as measure-function pairs in }L^q
  \label{4eq:MinThm2-2}
\end{align}
and the lower estimate
\begin{equation}
  \liminf_{k\to\infty} I(u_k,S_k)\geq I(u,S)
  \label{4eq:LiminfI}
\end{equation}
hold.

It remains to show the lower semicontinuity statement for $\W_{\bar\omega}$.
As in the proof of Theorem \ref{4thm:main}  we deduce 
\begin{equation}\label{eq:Min14}
  \big(\|V_k\|, H_k \sqrt{a^{\bar\omega}(u_k)}\big)\wto \big(\|V\|,H \sqrt{a^{\bar\omega}(u)}\big) \quad\text{ in }L^2\,.
\end{equation}
By \cite[Theorem 4.4.2 (ii)]{hutchinson1986second} we obtain
\begin{align*}
  \W_{\bar\omega}(u,V)&=\int |\sqrt{a^{\bar\omega}(u)}H|^2\,d\|V\|\\
  &\leq \liminf_{k\to\infty}\int |\sqrt{a^{\bar\omega}(u_k)}H_k|^2\,d\|V_k\|
  =\liminf_{k\to\infty}\W_{\bar\omega}(u_k,V_k).
\end{align*}
Together with \eqref{4eq:LiminfI} this yields that $(u,V)$ is a minimizer of $\calE$ in the class $X(M_1,M_2)$.
\end{proof}

\bibliographystyle{alpha}
\bibliography{phase}
 
\end{document}